\newtheorem{theorem}{Theorem}
\newtheorem{lemma}{Lemma}[section]
\newtheorem{proposition}[lemma]{Proposition}
\newtheorem{note}[lemma]{Note}
\theoremstyle{definition}
\newtheorem{definition}[lemma]{Definition}
\newtheorem{example}[lemma]{Example}
\DeclareMathOperator{\inter}{int}
\DeclareMathOperator{\bd}{bd}
\DeclareMathOperator{\conv}{conv}
\newcommand{\x}{\mathbf{x}}
\newcommand{\y}{\mathbf{y}}
\newcommand{\bb}{\mathbf{b}}
\newcommand{\Ze}{\mathbb{Z}}
\newcommand{\ve}{\mathbf{v}}
\newcommand{\ev}{\mathbf{e}}
\newcommand{\oo}{\mathbf{o}}
\newcommand{\K}{\mathbf{K}}
\newcommand{\SSS}{\mathbf{S}}
\newcommand{\LL}{\mathbf{L}}
\newcommand{\PPP}{\mathbf{P}}
\newcommand{\E}{\mathbb{E}}
\newcommand{\B}{\mathbf{B}}
\newcommand{\PP}{\mathcal{P}}
\newcommand{\csep}[1][\K,n]{c_{\textnormal{sep}}\left(#1\right)}
\newcommand{\lsep}[1][\K]{\lambda_{\textnormal{sep}}\left(#1\right)}
\newcommand{\dsep}[1][\rho,\K]{\delta_{\textnormal{sep}}\left(#1\right)}
\newcommand{\Iq}[1]{\IIqq(#1)}
\DeclareMathOperator{\IIqq}{Iq}
\newcommand{\floor}[1]{\left\lfloor#1\right\rfloor}
\renewcommand{\Re}{{\mathbb R}}
\newcommand{\Red}{\Re^d}
\newcommand{\Ed}{\E^{d}}
\newcommand{\st}{\; : \; }
\newcommand{\spann}{\operatorname{span}}
\newcommand{\vol}[2][d]{\operatorname{vol}_{#1}\left(#2\right)}
\newcommand{\iprod}[2]{\left<#1,\linebreak[0]#2\right>}
\newcommand{\xn}{\ensuremath{\x_1,\linebreak[0]\ldots,\linebreak[0]\x_n}}
\newcommand{\fn}{\ensuremath{f_1,\linebreak[0]\ldots,\linebreak[0]f_n}}
\newcommand{\xfn}{\ensuremath{(\x_1,f_1),\linebreak[0]\ldots,\linebreak[0](\x_n,
f_n)}}
\newcommand{\pairs}{vector-linear functional pairs}
\newcommand{\ijn}{\ensuremath{1\leq i,j\leq n, i\neq j}}
\newcommand{\iin}{\ensuremath{1\leq i\leq n}}
\newcommand{\Hsep}{H_{\rm sep}}
\newcommand{\hsep}{h_{\rm sep}}
\newcommand{\difour}{d\in\{1,2,3,4\}}
\title[Contact graphs of totally separable packings]{On contact graphs of 
totally separable packings in low dimensions
}
\keywords{Convex body, totally separable packing, Hadwiger number, separable 
Hadwiger number, contact graph, contact number, separable contact number.}
\subjclass[2010]{(Primary) 05C10, 52C15, (Secondary) 05B40, 46B20.}
\author[K. Bezdek \and M. Nasz\'odi]{K\'{a}roly Bezdek \and M\'arton Nasz\'odi}
\address[K.B.]{Department of Mathematics and Statistics, University of Calgary, 
Canada.}
\address{Department of Mathematics, University of Pannonia, Veszpr\'em, 
Hungary.}
\email{bezdek@math.ucalgary.ca}
\address[M.N.]{Department of Geometry, E\"otv\"os Lor\'and University, 
Budapest, 
Hungary}
\email{marton.naszodi@math.elte.hu}
\begin{document}
\date{}
\begin{abstract}
The \emph{contact graph} of a packing of translates of a convex body in 
Euclidean $d$-space $\mathbb E^d$ is the simple graph whose vertices are the 
members of the packing, and whose two vertices are connected by an edge if the 
two members touch each other. A packing of translates of a convex body is 
called \emph{totally separable}, if any two members can be separated by a 
hyperplane in $\mathbb E^d$ disjoint from the interior of every packing element.

We give upper bounds on the maximum vertex degree (called \emph{separable Hadwiger 
number}) and the maximum number of edges (called \emph{separable contact 
number}) of the contact graph of a totally separable packing of $n$ translates 
of an arbitrary smooth convex body in $\mathbb E^d$ with $d=2,3,4$. In the proofs, 
linear algebraic and convexity methods are combined 
with volumetric and packing density estimates based on the underlying 
isoperimetric (resp., reverse isoperimetric) inequality.
\end{abstract}
\maketitle


\sloppy

\renewcommand\footnotemark{} 
\section{Introduction}\label{sec:intro}

We denote the $d$-dimensional Euclidean space by $\Ed$, and the 
unit ball centered at the origin $\oo$ by $\B^d$. A \emph{convex 
body} $\K$ is a compact convex subset of $\Ed$ with nonempty interior. 
Throughout the paper, $\K$ always denotes a convex body in $\Ed$.
If $\K = -\K:=\{-x: x\in \K\}$, then $\K$ is said to be \emph{$\oo$-symmetric}. 
$\K$ is said to be \emph{smooth} if at every point on the boundary $\bd \K$ of 
$\K$, the body $\K$ is supported by a unique hyperplane of $\Ed$. $\K$ is 
\emph{strictly convex} if the boundary of $\K$ contains no nontrivial line 
segment.

The \emph{kissing number problem} asks for the maximum number $k(d)$ of 
non-overlapping translates of $\B^d$ that can touch $\B^d$. Clearly, 
$k(2)=6$. To date, the only known kissing number values are $k(3)=12$ 
\cite{ScWa53},
$k(4)=24$ \cite{Mu08}, $k(8)=240$ \cite{OdSl79}, and 
$k(24)=196560$ \cite{OdSl79}. 
For a survey of kissing numbers we refer the interested reader to 
\cite{Bo}. 

Generalizing the kissing number, the \emph{Hadwiger number} or \emph{the 
trans\-la\-ti\-ve kissing number} $H(\K)$ of a convex body 
$\K$ is the maximum number of non-overlapping translates of $\K$ that all touch 
$\K$. Given the difficulty of the kissing number problem, determining 
Hadwiger numbers is highly nontrivial with few exact values known for $d\ge 3$. 
The best general upper and lower bounds on $H(\K)$ are due to Hadwiger 
\cite{Ha} and Talata \cite{Ta} respectively, and can be expressed as 
\begin{equation}\label{eq:hadwiger}
2^{cd}\le H(\K)\le 3^{d} - 1, 
\end{equation}
where $c$ is an absolute constant and equality holds in the right inequality if 
and only if $\K$ is an affine $d$-dimensional cube \cite{Gr61}.

A packing of translates of a \emph{convex domain}, that is, a convex 
body $\K$ in $\E^2$ is said to be \emph{totally separable} if any two packing 
elements can be separated by a line of 
$\E^{2}$ disjoint from the interior of every packing element. This 
notion was introduced by G. Fejes T\'{o}th and L. Fejes T\'{o}th \cite{FTFT73}.

We can define a totally separable packing of translates of a 
$d$-di\-men\-si\-o\-nal 
convex body $\K$ in a similar way by requiring any two packing elements to be 
separated by a hyperplane in $\Ed$ disjoint from the interior of 
every packing element \cite{BeSzSz, Ke}.

Recall that the \emph{contact graph} of a packing of translates of $\K$ is the 
simple graph whose vertices are the members of the packing, and whose two 
vertices are connected by an edge if and only if the 
two members touch each other. In this paper we 
investigate the maximum vertex degree (called \emph{separable Hadwiger number}), as 
well as the maximum number of edges (called the \emph{maximum separable contact 
number}) of the contact graphs of totally separable packings by a 
given number of translates of a smooth or strictly convex body $\K$ in $\Ed$. 
This extends and generalizes the results of \cite{BeKhOl} and \cite{BeSzSz}.  
The details follow. 

\subsection{Separable Hadwiger numbers}\label{sec:sepHintro}
It is natural to introduce the totally separable analogue of the 
Hadwiger number as follows \cite{BeKhOl}.

\begin{definition}
Let $\K$ be a convex body in $\Ed$. We call a family of translates of $\K$ that 
all touch 
$\K$ and, together with $\K$, form a totally separable packing in $\Ed$ a 
\emph{separable Hadwiger configuration} of $\K$.
The \emph{separable Hadwiger number} 
$\Hsep(\K)$ of $\K$ is the maximum size of a separable Hadwiger configuration 
of $\K$.
\end{definition} 

Recall that the \emph{Minkowski symmetrization} of the convex body 
$\K$ in $\Ed$ denoted by $\K_{\oo}$ is defined by
$\K_{\oo} := \frac{1}{2}(\K + (- \K)) = \frac{1}{2}(\K - \K) = \frac{1}{2}\{\x 
- \y : \x , \y \in \K\}$. Clearly, $\K_{\oo}$ is an $\oo$-symmetric 
$d$-dimensional convex body.
Minkowski \cite{Mi04} showed that if ${\mathcal 
P}=\{\x_1+\K, \x_2+\K, \dots ,\x_n+\K\}$
is a packing of translates of $\K$, then ${\mathcal P}_{\oo}=\{\x_1+\K_{\oo}, 
\x_2+\K_{\oo}, \dots ,\x_n+\K_{\oo}\}$
is a packing as well. Moreover, the contact graphs of ${\mathcal P}$ and 
${\mathcal P}_{\oo}$ are the same. Using
the same method, it is easy to see that Minkowski's above statement applies to 
totally separable packings as well.
(See also \cite{BeKhOl}.)
Thus, from this point on, we only consider $\oo$-symmetric convex bodies.

It is mentioned in \cite{BeSzSz} that based on \cite{DH} (see also, \cite{Ra} 
and \cite{Ku}) it follows in a straightforward way that $\Hsep(\B^d)=2d$ 
for all $d\geq 2$. On the other hand,
if $\K$ is an $\oo$-symmetric convex body in $\Ed$, then each facet
of the minimum volume circumscribed parallelotope of $\K$ touches $\K$ at the
center of the facet and so, clearly $\Hsep(\K)\geq 2d$. Thus, 
\begin{equation}\label{basic-bounds}
2d\leq \Hsep(\K)\leq H(\K)\leq 3^d-1
\end{equation}
holds for any $\oo$-symmetric convex body $\K$ in $\Ed$. 
Furthermore, the $d$-cube is the only $\oo$-symmetric convex body
in $\E^d$ with separable Hadwiger number $3^d-1$ \cite{Gr61}.

We investigate equality in the first inequality of \eqref{basic-bounds}.
First, we note as an easy exercise that $\Hsep$ as a map from 
the set of convex bodies equipped with any reasonable topology to the reals is 
upper semi-continuous. Thus, for any $d$, if an $\oo$-symmetric convex body 
$\K$ in $\Ed$ is sufficiently close to the Euclidean ball $\B^d$ (say, 
$\B^d\subseteq \K\subseteq (1+\varepsilon_d)\B^d$, where $\varepsilon_d>0$ 
depends on $d$ only), then $\Hsep(\K)=2d$.

Hence, it is natural to ask whether the set of those $\oo$-symmetric convex 
bodies in $\Red$ with $\Hsep(\K)=2d$ is dense. In this paper, we investigate 
whether $\Hsep(\K)=2d$ holds for any $\oo$-symmetric smooth or strictly convex 
$\K$ in $\Ed$. Our first main result is a partial answer to this question.

\begin{definition}
An \emph{Auerbach basis} of an $\oo$-symmetric convex body 
$\K$ in $\Ed$ is a set of $d$ points on the boundary of $\K$ that form a basis 
of $\Ed$ with the property that the hyperplane through any one of them, 
parallel to the other $d-1$ supports $\K$.
\end{definition}

\begin{theorem}\label{thm:smoothstrictlycvx}
Let $\K$ be an $\oo$-symmetric convex body in $\Ed$, which is smooth \emph{or} 
strictly convex. Then 
\begin{enumerate}[(a)]
\item\label{item:threefourdim}
For $\difour$, we have $\Hsep(\K)=2d$ and, in any separable Hadwiger 
configuration of $\K$ with $2d$ translates of $\K$, the translation vectors are 
$d$ pairs of opposite vectors, where picking one from each pair yields an 
Auerbach basis of $\K$.
\item\label{item:DG}
$\Hsep(\K)\leq 2^{d+1}-3$ for all $d\geq 5$.
\end{enumerate}
\end{theorem}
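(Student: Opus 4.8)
The plan is to translate the geometry of a separable Hadwiger configuration into a system of linear inequalities among \pairs, and then to reduce the theorem to a purely combinatorial bound on the size of such a system. By the Minkowski reduction already recorded, I may assume $\K$ is $\oo$-symmetric; I first treat the smooth case and recover the strictly convex case at the end by polarity, since $\Kpol$ is smooth exactly when $\K$ is strictly convex, and polarity interchanges contact points with supporting functionals. Normalising so that the gauge of $\K$ is the ambient norm, a translate $\x_i+\K$ touching $\K$ has $\x_i\in\bd(2\K)$ and meets $\K$ at the single point $\tfrac12\x_i$; let $f_i$ be the supporting functional of $\K$ there, scaled so that $\max_\K f_i=1$, whence $f_i(\x_i)=2$. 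The first step is to observe that any hyperplane separating $\K$ from $\x_i+\K$ and missing $\inter\K$ must pass through the common boundary point $\tfrac12\x_i$ and support $\K$ there; smoothness then forces it to be the tangent hyperplane $\{f_i=1\}$.

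The second step extracts the decisive inequality. Because the configuration is totally separable, the forced hyperplane $\{f_i=1\}$ must also avoid the interior of every other translate $\x_j+\K$. Since $f_i$ takes values in $[f_i(\x_j)-1,f_i(\x_j)+1]$ on $\x_j+\K$, this leaves only $f_i(\x_j)\le 0$ or $f_i(\x_j)\ge 2$; the latter makes $f_i$ maximal at $\tfrac12\x_j$, which for strictly convex $\K$ gives $\x_j=\x_i$, and for smooth $\K$ is ruled out since separating $\x_i+\K$ from $\x_j+\K$ would then force a hyperplane through $\inter\K$. Thus every separable Hadwiger configuration yields vectors $\x_1,\dots,\x_n$ and functionals $f_1,\dots,f_n$ with $f_i(\x_i)=2$ and $f_i(\x_j)\le 0$ for $i\ne j$. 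Equivalently, setting $p_i=\tfrac12\x_i\in\bd\K$ and $\nu_i=f_i\in\bd\Kpol$, one obtains a family of contact pairs of $\K$ and $\Kpol$ with $\iprod{\nu_i}{p_i}=1$ and $\iprod{\nu_i}{p_j}\le 0$ for all $i\ne j$, and the whole problem reduces to bounding the number $n$ of such pairs in $\Red$.

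For part~(a) the target is the sharp bound $n\le 2d$ for $\difour$ together with the rigidity statement. The matrix $M=\bigl(f_i(\x_j)\bigr)_{i,j}$ has rank at most $d$, strictly positive diagonal and non-positive off-diagonal entries, and the model extremal system is $\{\pm\ve_1,\dots,\pm\ve_d\}$ with dual functionals. I would rule out $n=2d+1$ in each dimension $d\le 4$ by combining the rank bound with a sign-pattern (case) analysis of $M$, and then analyse the equality case: a system of $2d$ pairs saturating the inequalities should be rigid, forcing the $\x_i$ to split into $d$ antipodal pairs $\pm\ve_k$ with $\iprod{\nu_k}{\ve_l}=\delta_{kl}$. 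That biorthogonality, with the $\ve_k$ on $\bd\K$ and the $\nu_k$ on $\bd\Kpol$, is exactly the defining property of an Auerbach basis, which yields the stated structure. This sharp count and its rigidity are the main obstacle, and they are where the restriction $\difour$ is essential: the pairing $\iprod{\nu_i}{p_j}$ is not symmetric, so the classical bound of $2d$ on pairwise non-acute vectors does not transfer directly, and one expects $2d$ to fail once $d\ge 5$.

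For part~(b) I abandon sharpness and bound $n$ by a Danzer--Gr\"unbaum-type antipodality estimate applied to the same pair system. The functionals $\nu_i$ equip the configuration with sign data through the signs of $\iprod{\nu_i}{p_j}$; counting the admissible sign patterns---heuristically the $2^d-1$ ``full'' together with the $2^d-2$ ``proper'' ones, whose sum is exactly $2^{d+1}-3$---gives the claimed estimate. This argument uses only the inequalities $\iprod{\nu_i}{p_j}\le 0$ and the rank constraint, so it is uniform in $d\ge 5$; it is correspondingly far from tight in low dimensions, which is consistent with the much stronger value $2d$ obtained in part~(a).
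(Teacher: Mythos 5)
Your linearization step is sound and coincides with the paper's Lemma~\ref{lem:linearization}: a separable Hadwiger configuration of a smooth or strictly convex $\oo$-symmetric body yields pairs $(\x_i,f_i)$ with $f_i(\x_i)=1$ and $f_i(\x_j)\in[-1,0]$ for $i\neq j$. But from that point on, both halves of your argument are declarations of intent rather than proofs, and in each case the missing step is precisely the mathematical content of the theorem. For part~(\ref{item:threefourdim}) you propose to rule out $n=2d+1$ ``by combining the rank bound with a sign-pattern (case) analysis of $M$'' and assert that the equality case ``should be rigid.'' No such case analysis is exhibited, and there is no reason to expect one: the matrix $M$ is not symmetric (as you note yourself), so nothing forces $n\le 2d$ from rank and sign data alone --- indeed the paper only proves $2^{d+1}-3$ for $d\ge 5$ and leaves $2d$ open there, and Example~\ref{ex:fivedonesidedex} already shows that $n>d$ pairs can satisfy the open inequalities with $\oo\notin\conv\{\x_i\}$ in $\E^5$. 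The paper's actual route is geometric and explains where $d\le 4$ enters: split on whether $\oo\in\inter\conv\{\xn\}$. If yes, Steinitz's theorem with Reay's equality characterization (Proposition~\ref{prop:applysteinitzfirst}) gives $n\le 2d$ and the cross-polytope rigidity, whence the Auerbach basis. If no, one first removes antipodal pairs by the dimension-reduction of Note~\ref{note:projection}, then shows that every triangle $\conv\{\oo,\x_k,\x_\ell\}$ is a face of $\conv(\{\xn\}\cup\{\oo\})$ (Proposition~\ref{prop:edge}) and derives a contradiction from Radon's lemma, which needs $d+2$ points and is exactly why the argument stops at $d=4$; a face/projection recursion (Theorem~\ref{thm:strict}(\ref{item:twosided})) then gives $n<2d$. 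None of these ingredients --- the dichotomy on the position of $\oo$, Steinitz/Reay, the face property, Radon, the dimension reduction for antipodal pairs --- appears in your plan.

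For part~(\ref{item:DG}) the gap is of the same kind but starker: you invoke the name Danzer--Gr\"unbaum and then replace their method by ``counting the admissible sign patterns,'' with the totals $2^d-1$ and $2^d-2$ chosen so that they sum to $2^{d+1}-3$. You call this heuristic, and it is: there is no definition of the sign patterns, no injection from configuration points to patterns, and no argument. The paper's proof is volumetric in the genuine Danzer--Gr\"unbaum style: the half-body homothets $\K_i=\frac12(\K\cap\{f_i\ge 0\})+\frac{\x_i}{2}$ are pairwise non-overlapping, each of volume $2^{-(d+1)}\vol{\K}$, and all lie in $\K\setminus\inter(\frac12\K)$, which gives $n\le 2^{d+1}-2$; smoothness then lets one enlarge a single piece to $\K\cap\{f_1\ge 1/2\}$ with strictly larger volume, dropping the bound to $2^{d+1}-3$. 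If you want to salvage your write-up, the linearization can stay, but parts (a) and (b) need to be replaced by actual arguments along these (or other concrete) lines.
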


We note that part (a) of Theorem~\ref{thm:smoothstrictlycvx} was 
proved for $d=2$ and smooth $\oo$-symmetric convex domains in \cite{BeKhOl}. We 
prove Theorem~\ref{thm:smoothstrictlycvx} in Section~\ref{sec:thmproofs}.

\subsection{One-sided separable Hadwiger numbers}\label{sec:onesided}

The one-sided Had\-wi\-ger number $h(\K)$ of an $\oo$-symmetric convex body 
$\K$ in 
$\E^d$ has been defined in \cite{BeBr} as the maximum number of non-overlapping 
translates of $\K$ that can touch $\K$ and lie in a {\it closed} supporting 
half-space of $\K$. It is proved in \cite{BeBr} that $h(\K)\leq 2\cdot 
3^{d-1}-1$ holds
for any $\oo$-symmetric convex body $\K$ in $\E^d$ with equality for affine 
$d$-cubes only.


One could consider the obvious extension of the one-side Had\-wi\-ger number to 
separable 
Hadwiger configurations. However, a more restrictive and slightly more 
technical definition serves our purposes better, the reason of which will 
become clear in Theorem~\ref{thm:strict} and Example~\ref{ex:fivedonesidedex}.

\begin{definition}
Let $\K$ be a smooth $\oo$-symmetric convex body in $\Ed$. The \emph{one-sided 
separable Hadwiger number} $\hsep(\K)$ of $\K$ is the maximum number $n$ of 
translates $2\x_1+\K,\ldots,\linebreak[0] 2\x_n+\K$ of $\K$ that 
form a separable Hadwiger configuration of $\K$, and the 
following holds. If \fn{} denote supporting linear functionals of $\K$ at the 
points \xn, respectively, then $\oo\notin\conv\{\xn\}$ and 
$\oo\notin\conv\{\fn\}$.
\end{definition} 

\begin{definition}
For a positive integer $d$, let

\begin{center}
$
 \hsep(d):=\max\{\hsep(\K)\st \K $ is an  
$\oo$-symmetric, smooth and strictly convex body in $\Ed\}$, 
\\
$
 \Hsep(d):=\max\{\Hsep(\K)\st \K $ is an  
$\oo$-symmetric, smooth and strictly convex body in $\Ed\}$, 
\end{center}
and set $\Hsep(0)=\hsep(0)=0$.
\end{definition}

The proof of part (\ref{item:threefourdim}) of 
Theorem~\ref{thm:smoothstrictlycvx} relies on the following fact: for the 
smallest dimensional example $\K$ of an $\oo$-symmetric, smooth and strictly 
convex body with $\Hsep(\K)>2d$, we have $\hsep(\K)>2d$. More precisely,

\begin{theorem}\label{thm:strict}\leavevmode
\begin{enumerate}[(a)]
\item\label{item:twosided}
$\hsep(d)\leq \Hsep(d)\leq\max\{2\ell+\hsep(d-\ell)\st \ell=0,\ldots,d\}$.
\item\label{item:onesided}
$\hsep(d)=d$ for $\difour$.
\item\label{item:onesidedeuclidean}
$\hsep(\B^d)=d$ for the $d$-dimensional Euclidean ball $\B^d$ with $d\in\Ze^+$.
\end{enumerate}
\end{theorem}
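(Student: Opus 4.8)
\emph{Setup and the key sign condition.} The backbone of the whole argument is a single sign inequality. When $\K$ and a touching translate $2\x_i+\K$ meet, their common point is $\x_i\in\bd\K$, and by smoothness the supporting functional $f_i$ of $\K$ at $\x_i$ is unique, so the \emph{only} hyperplane that can separate $\K$ from $2\x_i+\K$ is the supporting hyperplane $H_i=\{f_i=h_\K(f_i)\}$ at $\x_i$. The plan is to first record that total separability forces $H_i\cap\inter(2\x_j+\K)=\emptyset$ for every $j$, i.e. $2\x_j+\K$ lies in one closed half-space of $H_i$; the ``far'' side would require $f_i(\x_j)\ge h_\K(f_i)$, which strict convexity excludes for $j\neq i$, leaving
$f_i(\x_j)\le 0$ for all $i\neq j$, while $f_i(\x_i)=h_\K(f_i)>0$. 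This condition $(\star)$, together with the one-sided hypotheses $\oo\notin\conv\{\x_1,\ldots,\x_n\}$ and $\oo\notin\conv\{f_1,\ldots,f_n\}$, is the only combinatorial input the upper bounds will use, and it is invariant under the polarity $\K\mapsto\K^{\circ}$ (which interchanges the roles of the $\x_i$ and the $f_i$ and preserves being smooth, strictly convex and $\oo$-symmetric); I will exploit this symmetry to halve several case analyses.

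\emph{Part (c) and the lower bounds.} For the ball one has $f_i=\x_i$, so $(\star)$ reads $\langle\x_i,\x_j\rangle\le 0$ and both one-sided conditions collapse to $\oo\notin\conv\{\x_i\}$, equivalently the $\x_i$ lie in an open half-space $\{\langle u,\cdot\rangle>0\}$. I would prove $\hsep(\B^d)\le d$ by showing such vectors are linearly independent: splitting a dependence $\sum c_i\x_i=\oo$ into its positive and negative parts $w=\sum_{P}c_i\x_i=\sum_{N}(-c_j)\x_j$ gives $\langle w,w\rangle=\sum_{i\in P,\,j\in N}c_i(-c_j)\langle\x_i,\x_j\rangle\le 0$, hence $w=\oo$, whereupon $\langle u,w\rangle=0$ forces $P=N=\varnothing$. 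The matching configuration $2\ev_1+\B^d,\ldots,2\ev_d+\B^d$ is totally separable and one-sided, giving $\hsep(\B^d)=d$; since $\B^d$ is admissible this also yields the lower bound $\hsep(d)\ge d$ used in (b).

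\emph{Part (a).} The first inequality $\hsep(d)\le\Hsep(d)$ is immediate, every one-sided separable Hadwiger configuration being a separable Hadwiger configuration over the same class of bodies. For the recursion I would isolate the reduction step: \emph{if the configuration is not one-sided, then $n\le 2+\Hsep(d-1)$}. Unrolling this against the base case (one-sided gives $n\le\hsep(d)$) yields $\Hsep(d)\le\max\{\hsep(d),\,2+\Hsep(d-1)\}\le\cdots\le\max_{\ell}\{2\ell+\hsep(d-\ell)\}$ using $\Hsep(0)=0$. To prove the reduction step, assume by polarity $\oo\in\conv\{f_i\}$; a minimal positive dependence among the $f_i$ pins down a direction along which I project $\Ed$ onto a $(d-1)$-dimensional quotient, discarding at most two members of the family. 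The main technical point here is to verify that the projected family is again a separable Hadwiger configuration of the (still smooth, strictly convex, $\oo$-symmetric) projection of $\K$ in $\E^{d-1}$, i.e. that $(\star)$ survives the projection and that exactly the non-one-sidedness is absorbed by the discarded pair; configurations with no antipodal pair (such as three functionals surrounding $\oo$) are what make this bookkeeping delicate.

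\emph{Part (b) and the main obstacle.} The hard statement is $\hsep(d)\le d$ for $\difour$. Since now $f_i\neq\x_i$ in general, the symmetric Gram argument of (c) is unavailable, and the natural object becomes the $n\times n$ matrix $M=(f_i(\x_j))$, which has positive diagonal, non-positive off-diagonal entries, and rank at most $d$. If $n\ge d+1$ then $M$ is singular; rescaling rows I would write $M=I-A$ with $A\ge 0$ and zero diagonal, so singularity means $1\in\mathrm{spec}(A)$, and the goal is to contradict this using the one-sided data. The vectors $u$ and $y$ coming from $\oo\notin\conv\{\x_i\}$ and $\oo\notin\conv\{f_i\}$ should force the relevant Perron root below $1$ (making $I-A$ an invertible M-matrix on each irreducible block): a Perron--Frobenius eigenvector $c\ge \oo$ would produce $z=\sum c_j\x_j$ with $f_i(z)\le 0$ for all $i$ yet $\langle u,z\rangle>0$ and $z\neq\oo$, the tension to be exploited. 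Converting this tension into an outright contradiction is exactly where the dimension enters, and I expect it to be the main difficulty: the statement is \emph{false} for $d\ge 5$ (Example~\ref{ex:fivedonesidedex}), so any correct proof must use $\difour$ in a way that closes only in these dimensions, and I anticipate supplementing the M-matrix/Perron analysis with a direct geometric treatment of the four cases, exploiting that the $\x_i$ and the $f_i$ each lie in an open half-space.
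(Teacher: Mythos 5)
Your part (c) is correct: the linear--independence argument (splitting a dependence $\sum c_i\x_i=\oo$ into its positive and negative parts) is a valid alternative to the paper's orthogonal--projection--and--induction argument, and your lower-bound configuration $2\ev_1+\B^d,\ldots,2\ev_d+\B^d$ works. The genuine gap is in part (b), which you yourself flag as unresolved: the M-matrix/Perron--Frobenius setup only produces a nonzero $z=\sum c_j\x_j$ with nonnegative coefficients and $f_i(z)\le 0$ for all $i$, which is not by itself contradictory (it cannot be, since the statement fails for $d\ge 5$), and you give no mechanism by which the hypothesis $\difour$ closes the argument. The paper's mechanism is entirely different and is the heart of the proof: Proposition~\ref{prop:edge} shows that under \eqref{eq:linearization} with $\oo\notin\conv\{\xn\}$, \emph{every} triangle $\conv\{\oo,\x_k,\x_\ell\}$ is a face of $\conv(\{\xn\}\cup\{\oo\})$, via the two-line computation $0\geq f_k(\x)+f_\ell(\x)=\lambda(1+f_\ell(\x_k))+\mu(1+f_k(\x_\ell))>0$. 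If $n=d+1$, the $d+2$ points $\oo,\x_1,\ldots,\x_{d+1}$ admit a Radon partition; for $d\le 4$ one of the two parts is necessarily contained in some $\{\oo,\x_k,\x_\ell\}$, hence spans a face of the polytope, which is disjoint from the convex hull of the remaining vertices --- contradicting Radon. (For $d=5$ a $(3,4)$-partition can have its small part avoid $\oo$, which is exactly where Example~\ref{ex:fivedonesidedex} lives.) Note that only $\oo\notin\conv\{\xn\}$ is used here. Without this, or an equivalent idea, part (b) is not proved.

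A secondary problem is your reduction step in part (a). You claim that a non-one-sided configuration satisfies $n\le 2+\Hsep(d-1)$ by discarding two members and projecting to a hyperplane, but when $\oo$ lies in the relative interior of a face of $\conv\{\xn\}$ spanned by $k\ge 3$ points with no antipodal pair among them (the case you call ``delicate''), there is no single direction along which two discarded members absorb the dependence. The paper instead takes the whole supporting face $F\ni\oo$, sets $H=\spann F$, bounds the number $k$ of its vertices by $2\dim H$ via Steinitz's theorem (Proposition~\ref{prop:applysteinitzfirst}), observes that the functionals of the remaining members vanish on $H$, and projects those members to $H^\perp$, where \eqref{eq:linearization} persists; this discards up to $2\ell$ points while dropping the dimension by $\ell$, which is exactly what the formula $\max_\ell\{2\ell+\hsep(d-\ell)\}$ records. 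Your one-step recursion would need this more flexible reduction (or a proof that the $\ell=1$ step always suffices) to go through.
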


According to Note~\ref{note:projection}, when bounding $\Hsep(\K)$ for a smooth 
\emph{or} strictly convex body $\K$, it is sufficient to consider smooth 
\emph{and} strictly convex bodies.

As a warning sign, in Example~\ref{ex:fivedonesidedex} we show that there is an 
$\oo$-symmetric, smooth and strictly convex body $\K$ in $\E^5$, which has a 
set of 6 translates that form a separable Hadwiger configuration, and 
the origin is not in the convex hull of the translation vectors.

We prove Theorem~\ref{thm:strict}, and present Example~\ref{ex:fivedonesidedex} 
in Section~\ref{sec:thmproofs}.

\subsection{Maximum separable contact numbers}
Let $\K$ be an $\oo$-sym\-met\-ric convex body in $\Ed$, and let 
${\mathcal P}:=\{\x_1+\K,\ldots,\x_n+\K\}$ be a packing of translates 
of $\K$. The number of edges in the contact graph of $\mathcal P$ is called the 
\emph{contact number} of $\mathcal P$. Finally let $c(\K,n)$ denote the 
\emph{largest 
contact number} of a packing of $n$ translates of $\K$ in $\E^d$. It is proved 
in \cite{Be02}
that $c(\K,n)\leq \frac{H(\K)}{2}n-n^{\frac{d-1}{d}}g(\K)$ holds for all $n>1$, 
where
$g(\K)>0$ depends on $\K$ only.

\begin{definition}
If $d, n\in\Ze^+$ and $\K$ is an $\oo$-sym\-met\-ric convex body in $\Ed$,
then let  $\csep$ denote the largest 
contact number of a totally separable packing of $n$ translates of $\K$.
\end{definition}

According to Theorem~\ref{thm:smoothstrictlycvx}, the maximum degree in the 
contact graph of a totally separable packing of a smooth convex body $\K$ is 
$2d$, and hence, $\csep\leq dn$, for $\difour$. Our second 
main result is a stronger bound.

\begin{theorem}\label{thm:contactno}
 Let $\K$ be a smooth $\oo$-symmetric convex body in $\Ed$ with $\difour$. 
Then
 \begin{equation*}
  \csep\leq dn-n^{(d-1)/d}f(\K)
  \end{equation*}
 for all $n>1$, where $f(\K)>0$ depends on $\K$ only.
 
 In particular, if $\K$ is a smooth $\oo$-symmetric convex domain in $\E^2$, 
then
 \begin{equation*}
  \csep\leq 2n-\frac{\sqrt{\pi}}{8}\sqrt{n}
  \end{equation*}
 holds for all $n>1$.
\end{theorem}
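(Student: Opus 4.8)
The plan is to adapt the volumetric/packing-density argument of Bezdek \cite{Be02} to the totally separable setting, where the crucial new input is that the separable Hadwiger bound gives maximum degree $2d$ rather than $H(\K)$. Since $\K$ is smooth and $\oo$-symmetric, I may assume by Minkowski symmetrization and the scaling normalization that $\K = \K_\oo$ is $\oo$-symmetric, and Theorem~\ref{thm:smoothstrictlycvx}\eqref{item:threefourdim} guarantees that each translate touches at most $2d$ others, so the contact graph has at most $dn$ edges. The goal is to beat this by $n^{(d-1)/d}f(\K)$, and the natural mechanism is that the translates on the \emph{boundary} of the packing cluster necessarily have fewer than $2d$ contacts, and there must be on the order of $n^{(d-1)/d}$ such boundary translates by an isoperimetric-type estimate.

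First I would set up the standard double-counting: the contact number equals half the sum of the degrees, so
\begin{equation*}
\csep \;=\; \frac{1}{2}\sum_{i=1}^{n}\deg(\x_i+\K) \;=\; dn - \frac{1}{2}\sum_{i=1}^{n}\bigl(2d-\deg(\x_i+\K)\bigr),
\end{equation*}
and thus it suffices to show the total \emph{degree deficiency} $\sum_i (2d-\deg(\x_i+\K))$ is at least $2n^{(d-1)/d}f(\K)$. Next I would invoke the separability structure: because the packing is totally separable, the key geometric fact (the analogue of what drives the planar $6$-versus-separable bound) is that contacts only occur in the $2d$ ``axis'' directions determined locally, so a translate with full degree $2d$ is hemmed in on all sides. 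I would then bound the volume: the union of the $n$ translates together with a suitable outer neighborhood occupies a region whose volume is at least $n\vol{\K}$, while its surface area controls how many translates can sit on the boundary. Enlarging the convex hull of the packing by a fixed multiple of $\K$ and comparing volumes of $C := \conv\{\x_1,\ldots,\x_n\}+\K$ against $n\vol{\K}$ gives $\vol{C}\geq n\vol{\K}$, so by the isoperimetric inequality the surface area of $C$ is at least $c(\K)\,n^{(d-1)/d}$ for a constant depending only on $\K$.

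The heart of the argument is to convert surface area into a count of deficient translates. I would argue that every translate achieving the full degree $2d$ must lie in the ``interior'' of the packing in the sense that a whole fixed-size neighborhood is covered, so the boundary layer consists of translates each missing at least one contact; a packing-density estimate in the boundary slab then shows the number of such boundary translates is at least proportional to $\operatorname{vol}_{d-1}(\bd C)\geq c(\K)n^{(d-1)/d}$. Collecting constants into a single $f(\K)>0$ yields the general bound. For the explicit planar case $d=2$, I would make every constant sharp: here $\K=\K_\oo$ is a smooth $\oo$-symmetric convex domain, and I would use the classical isoperimetric inequality $\operatorname{perimeter}(C)^2\geq 4\pi\,\vol[2]{C}$ together with $\vol[2]{C}\geq n\,\vol[2]{\K}$ and a direct count of the boundary translates along the perimeter, tuning the covering/packing ratio to extract the stated constant $\frac{\sqrt{\pi}}{8}$.

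The main obstacle I anticipate is the passage from surface area to a rigorous lower bound on the \emph{degree deficiency}, rather than merely on the number of boundary translates: it is not immediate that a boundary translate is missing a contact, nor that interior translates attain degree exactly $2d$. The separability hypothesis is what should rescue this, since the separating hyperplanes force the contacts into a controlled configuration and prevent a boundary translate from being fully surrounded; pinning down exactly how the separating hyperplanes restrict the local contact pattern, and doing so uniformly in a way that survives the volumetric comparison, is where the real work lies. I would expect the cleanest route to mimic Bezdek's original deficiency-counting \cite{Be02} step-by-step, substituting $2d$ for $H(\K)$ throughout and verifying that total separability does not degrade the isoperimetric constant.
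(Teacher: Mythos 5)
Your outline follows the same strategy as the paper (adapting \cite{Be02}: count degree-deficient translates, bound their number from below by an isoperimetric estimate on an enlarged union of the packing, and use Ball's reverse isoperimetric inequality plus affine invariance of $\csep$ to collect the constants). However, there is a genuine gap, and it sits exactly at the step you yourself flag as ``where the real work lies'': you never establish that a translate of full degree $2d$ contributes nothing to the boundary of the enlarged packing. The paper's proof of this is Lemma~\ref{lem:lsep}: there is a $\lambda=\lsep$ such that for \emph{every} separable Hadwiger configuration $\{\K\}\cup\{\x_i+\K\st i=1,\ldots,2d\}$ one has $\lambda\K\subseteq\bigcup_{i=1}^{2d}(\x_i+\lambda\K)$, whence $\vol[d-1]{\bd(C+\lsep\K)}\leq(n-m)(\lsep)^{d-1}\vol[d-1]{\bd\K}$ with $m$ the number of full-degree translates. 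Proving this requires three inputs you do not supply: (i) the rigidity part of Theorem~\ref{thm:smoothstrictlycvx}\eqref{item:threefourdim}, namely that $2d$ contacts force the translation vectors to be $\pm$ an Auerbach basis (your phrase ``contacts only occur in the $2d$ axis directions'' is an unproved gloss of this); (ii) smoothness of $\K$, which is what makes each boundary ray $\{\bb-t\x_i\st t>0\}$ enter $\inter\K$ for some $i$ --- the paper's Section~\ref{sec:remarks} gives a strictly convex, non-smooth $\K$ for which $\lsep=\infty$, so the covering claim is genuinely false without smoothness and cannot be obtained by ``substituting $2d$ for $H(\K)$'' in \cite{Be02}; and (iii) a compactness argument over the set of all Auerbach bases of $\K$ to make $\lambda$ uniform over all local configurations, which is essential because $f(\K)$ must depend on $\K$ alone and not on the packing.

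Two further points. First, you enlarge the \emph{convex hull} $\conv\{\x_1,\ldots,\x_n\}+\K$; the paper instead applies the isoperimetric inequality to the (generally non-convex) union $C+\lsep\K$, which is what makes the surface-area-to-deficiency conversion work: the boundary of the union is covered by boundary pieces of the deficient translates only, by Lemma~\ref{lem:lsep}. With the convex hull there is no comparable direct bound. Second, the explicit planar constant $\frac{\sqrt{\pi}}{8}$ is not obtained by ``tuning a covering/packing ratio'': it requires the specific value $\lsep=2$ for $d=2$ (the parallelogram/Auerbach computation in Lemma~\ref{lem:lsep}), the crude bound $\dsep[\lsep/2,\K]\leq 1$, and Ball's reverse isoperimetric inequality $\Iq{T\K}\leq(2d)^d$ applied after replacing $\K$ by an affine image. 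None of these is present in your sketch, so as written the proposal establishes neither the general inequality nor the planar constant.
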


In \cite{BeKhOl} it is proved that $\csep =\floor{2n-2\sqrt{n}}$ holds for any 
$\oo$-symmetric smooth {\it strictly convex} domain $\K$ and any $n>1$. Thus, 
one may wonder whether the same statement holds for any smooth $\oo$-symmetric 
convex domain $\K$.

We prove Theorem~\ref{thm:contactno} in 
Section~\ref{sec:contactproof}. For a more explicit form
of Theorem~\ref{thm:contactno} see Theorem~\ref{thm:contactnoPrecise} in 
Section~\ref{sec:contactproof}.

\subsection{Organization of the paper}

In Section~\ref{sec:basics} we develop a dictionary that helps translate the 
study of separable Hadwiger configurations of smooth or strictly 
convex bodies to the language of systems of vector--linear functional pairs. In 
Section~\ref{sec:thmproofs}, based on our observations in 
Section~\ref{sec:basics}, we prove Theorem~\ref{thm:strict}, and show how our 
first main result, Theorem~\ref{thm:smoothstrictlycvx} follows from it. 

In Section~\ref{sec:contactproof} we prove our second main result, 
Theorem~\ref{thm:contactno}. This proof is an adaptation of the proof of the 
main result of \cite{Be02} to the setting of totally separable packings of 
smooth convex bodies. One of the main challenges of the adaptation is to 
compute the maximum vertex degree of the contact graph of a totally separable family 
of translates of a smooth convex body $\K$, and to characterize locally the 
geometric setting where this maximum is attained. This local characterization 
is provided by Theorem~\ref{thm:smoothstrictlycvx}.

Finally, in Section~\ref{sec:remarks}, we describe open problems and outline  
the difficulties in translating Theorem~\ref{thm:contactno} to strictly convex 
(but, possibly not smooth) convex bodies.

\section{Linearization, fundamental properties}\label{sec:basics}

First, in order to give a linearization of the problem, we 
consider a set of $n$ pairs \xfn{} where $\x_i\in\Ed$ and $f_i$ is a linear 
functional on $\Ed$ for all $\iin$, and we define the following conditions 
that they may satisfy.

\begin{align}
 f_i(\x_i)=1\;\;\mbox{ and }\;\; f_i(\x_j)\in[-1,0] &\mbox{ holds for all } 
\ijn.
   \label{eq:linearizationFirst}\tag{Lin}\\ 
 f_i(\x_j)=-1, \mbox{ if and only if, } \x_j=-\x_i &\mbox{ holds for all } \ijn.
   \label{eq:linearizationSC}\tag{StrictC}\\
 f_i(\x_j)=-1, \mbox{ if and only if, } f_j=-f_i &\mbox{ holds for all } \ijn.
   \label{eq:linearizationSM}\tag{Smooth}\\
 f_i(\x_i)=1\;\;\mbox{ and }\;\; f_i(\x_j)\in(-1,0] &\mbox{ holds for all } 
\ijn.
   \label{eq:linearization}\tag{OpenLin}
\end{align}

\begin{lemma}\label{lem:linearization}
There is an $\oo$-symmetric, strictly convex body $\K$ in $\Ed$ with 
$\Hsep(\K)\geq n$ if and 
only if, there is a set of $n$ \pairs{} \xfn{} in $\Ed$
satisfying \eqref{eq:linearizationFirst} and \eqref{eq:linearizationSC}.

Similarly, there is an $\oo$-symmetric, smooth convex body $\K$ in $\Ed$ with 
$\Hsep(\K)\geq n$ 
if and only if, there is a set of 
$n$ \pairs{} \xfn{} in $\Ed$
satisfying \eqref{eq:linearizationFirst} and \eqref{eq:linearizationSM}.

Furthermore, the existence of an $\oo$-symmetric, smooth and strictly convex 
body with 
$\Hsep(\K)\geq n$ is equivalent to the existence of 
$n$ \pairs{} satisfying \eqref{eq:linearizationFirst}, 
\eqref{eq:linearizationSC} and \eqref{eq:linearizationSM}.
\end{lemma}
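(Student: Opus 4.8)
The plan is to establish both implications by routing through the symmetric sandwich $Q\subseteq\K\subseteq P$ attached to the data, where after identifying each functional $f_i$ with the vector representing it, polarity becomes available. Since $\K$ is $\oo$-symmetric, a translate touches $\K$ precisely when it has the form $2\x_i+\K$ with $\x_i\in\bd\K$, so a separable Hadwiger configuration of size $n$ is a family $\{2\x_i+\K\}_{i=1}^n$ that, together with $\K$, is totally separable.

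For the forward implication, suppose $\K$ is smooth or strictly convex and carries such a configuration. For each $i$, total separability supplies a hyperplane separating $\K$ from $2\x_i+\K$ and disjoint from every interior; as these two bodies touch, this hyperplane passes through the contact point and supports $\K$ there, and I let $f_i$ be its functional normalized by $f_i(\x_i)=1$ (the unique supporting functional at $\x_i$ when $\K$ is smooth). First I would record $\K\subseteq\{-1\le f_i\le 1\}$ from $\oo$-symmetry. To obtain \eqref{eq:linearizationFirst}, note that $\{f_i=1\}$ leaves $2\x_j+\K$ in a closed halfspace; the halfspace $\{f_i\ge 1\}$ would force $f_i(\x_j)\ge 1$, hence $\x_j=\x_i$, so in fact $2\x_j+\K\subseteq\{f_i\le 1\}$, giving $f_i(\x_j)\le 0$. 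For \eqref{eq:linearizationSC} in the strictly convex case and \eqref{eq:linearizationSM} in the smooth case I would inspect the face $\{f_i=-1\}\cap\K$: strict convexity collapses it to a single point, forcing $\x_j=-\x_i$ whenever $f_i(\x_j)=-1$, while smoothness makes the supporting functional there unique, forcing $f_j=-f_i$.

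For the converse, given pairs satisfying \eqref{eq:linearizationFirst}, set $Q:=\conv\{\pm\x_1,\ldots,\pm\x_n\}$ and $P:=\bigcap_i\{x: |f_i(x)|\le 1\}$; then \eqref{eq:linearizationFirst} gives $Q\subseteq P$. I would first show that \emph{every} $\oo$-symmetric $\K$ with $Q\subseteq\K\subseteq P$ already satisfies $\Hsep(\K)\ge n$. Indeed $\x_i\in Q\subseteq\K$ while $\x_i\in\bd P$ (as $f_i(\x_i)=1$), so $\x_i\in\bd\K$ and $f_i$ supports $\K$ at $\x_i$; moreover the $\x_i$ are distinct by \eqref{eq:linearizationFirst}. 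The hyperplane $\{f_i=1\}$ then certifies total separability: one has $2\x_i+\K\subseteq\{f_i\ge 1\}$, whereas $\K$ and every $2\x_j+\K$ with $j\neq i$ lie in $\{f_i\le 1\}$ by \eqref{eq:linearizationFirst}, so this single hyperplane separates $2\x_i+\K$ from each of the remaining $n$ bodies and is disjoint from all interiors.

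It remains to choose $\K$ in $[Q,P]$ of the prescribed regularity, which is the main obstacle: one must \emph{produce} a body of the exact class having all $\x_i$ on its boundary with $f_i$ supporting, not merely approximate one. The key that unlocks this is a structural consequence of \eqref{eq:linearizationFirst} together with \eqref{eq:linearizationSC}, namely that each facet $\{f_i=\pm 1\}\cap P$ contains exactly one of the points $\pm\x_k$; hence the relative interior of every facet of $Q$ lies in $\inter P$, leaving room to replace the flat facets of $Q$ by strictly convex caps bulging slightly into $P$ while fixing the vertices $\pm\x_i$. This yields a strictly convex $\K$ in $[Q,P]$, settling the first case. For the smooth case I would exploit that \eqref{eq:linearizationFirst} is invariant under interchanging the roles of the $\x_i$ and the $f_i$, and that \eqref{eq:linearizationSM} for $(\x_i,f_i)$ is precisely \eqref{eq:linearizationSC} for $(f_i,\x_i)$; applying the construction just described to the latter produces a strictly convex body in $[\conv\{\pm f_i\},Q^{\circ}]$, whose polar is a smooth $\K$ in $[Q,P]$. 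In the smooth \emph{and} strictly convex case both procedures are carried out at once, bulging the facets of $Q$ for strict convexity and rounding the corners $\pm\x_i$ into smooth strictly convex caps tangent to $f_i$ for smoothness, all while staying inside $P$; the unique-point property above is exactly what guarantees these caps can be bulged strictly inside $P$ without disturbing any required contact and what makes the polarity argument close up.
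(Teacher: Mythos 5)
Your overall route is the same as the paper's: extract $f_i$ from the separating hyperplane of $\K$ and $2\x_i+\K$ for the forward direction, and for the converse inscribe a body of the required regularity in the slab intersection $P=\bigcap_i\{|f_i|\le 1\}$ through the points $\pm\x_k$, handling the smooth case by swapping the roles of the $\x_i$ and the $f_i$ and polarizing. The converse half of your argument, including the facet-incidence observation that under \eqref{eq:linearizationFirst} and \eqref{eq:linearizationSC} each facet $\{f_i=\pm1\}\cap P$ meets $\{\pm\x_1,\ldots,\pm\x_n\}$ in exactly one point, matches the paper's proof in substance and level of detail.

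There is, however, a genuine gap in your forward direction, precisely where you rule out $f_i(\x_j)=1$. You argue: if $2\x_j+\K\subseteq\{f_i\ge 1\}$ then $f_i(\x_j)\ge 1$, ``hence $\x_j=\x_i$.'' That implication uses that $\K\cap\{f_i=1\}$ is the single point $\x_i$, which is true only under \emph{strict convexity}. For a smooth but not strictly convex $\K$ the face $\K\cap\{f_i=1\}$ may be a nontrivial segment or higher-dimensional face containing both $\x_i$ and $\x_j$ with $\x_j\neq\x_i$, and nothing you have written excludes this; so your proof of \eqref{eq:linearizationFirst} covers only the strictly convex half of the lemma. To close the smooth case one must bring in the separation requirement between the two \emph{translates}: letting $H:=\{f_i=1\}$ and $S$ be a hyperplane separating $2\x_i+\K$ from $2\x_j+\K$ and disjoint from $\inter\K$, smoothness forces $\K\cap H\cap S=\emptyset$ (otherwise $S$ would be a second support hyperplane of $\K$ at a point of $\K\cap H$, hence $S=H$, which cannot separate two bodies both lying in $\{f_i\ge1\}$ and touching $H$); since any contact point of $\K$ with $2\x_i+\K$ or $2\x_j+\K$ must lie in $\K\cap H$ and, for one of the two translates, also in $S$, that translate cannot touch $\K$ --- a contradiction. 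This is exactly the step the paper supplies and your sketch omits; the rest of your argument stands.
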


\begin{proof}[Proof of Lemma~\ref{lem:linearization}]
Let $\K$ be an $\oo$-symmetric convex body in $\E^d$.
Assume that $2\x_1+\K,2\x_2+\K,\ldots,2\x_n+\K$ is a separable 
Hadwiger configuration of $\K$, where $\x_1,\dots ,\x_n\in\bd \K$. 
For $\iin$, let $f_i$ be the linear functional corresponding to the separating 
hyperplane of $\K$ and $2\x_i+\K$ which is disjoint from the interior of all 
members of the family. That is, $f_i(\x_i)=1$ and $-1\leq f_i|_K\leq 1$.

Total separability yields that $f_i(\x_j)\in[-1,1]\setminus(0,1)$, for any 
$\ijn$.
Suppose that $f_i(\x_j)=1$. Then $2\x_i+\K$ and $2\x_j+\K$ both touch the 
hyperplane $H:=\{x\in\Ed\st f_i(x)=1\}$ from one side, while $\K$ is on the 
other side of this hyperplane. 

If $\K$ is strictly convex, then this is clearly not possible.

If $\K$ is smooth, then let $S$ be a separating hyperplane of $2\x_i+\K$ 
and $2\x_j+\K$ which is disjoint from $\inter \K$. Since $\K$ is smooth, 
$\K\cap H\cap S=\emptyset$, and hence, $\K$ does not touch 
$2\x_i+\K$ or $2\x_j+\K$, a contradiction.

Thus, if $\K$ is strictly convex or smooth, then \eqref{eq:linearizationFirst} 
holds.

If $\K$ is strictly convex (resp., smooth), then \eqref{eq:linearizationSC} 
(resp., \linebreak[0]\eqref{eq:linearizationSM}) follows immediately.

Next, assume that \xfn{} is a set of $n$ 
\pairs{} satisfying \eqref{eq:linearizationFirst} and 
\eqref{eq:linearizationSC}. We need to show that there is a strictly convex 
body $\K$ with $\Hsep(\K)\geq n$.
Consider the $\oo$-symmetric convex set $\LL:=\{\x\in\Ed\st f_i(\x)\in[-1,1] 
\text{ for all }\iin\}$, the intersection of $n$ $\oo$-symmetric slabs. 

Fix an \iin. If there is no $j\neq i$ with $f_j(\x_i)=-1$, then 
$\x_i$ is in the relative interior of a facet of the 
polyhedral set $\LL$, moreover, by \eqref{eq:linearizationSC}, no other point 
of the set $\{\pm \x_1,\ldots,\pm\x_n\}$ lies on that facet.

If there is a $j\neq i$ with $f_j(\x_i)=-1$, then 
$\x_i$ is in the intersection of two facets of $\LL$, moreover, by 
\eqref{eq:linearizationSC}, no other point of the set $\{\pm 
\x_1,\ldots,\pm\x_n\}$ lies on the union of those two facets.

Thus, there is an $\oo$-symmetric, strictly convex body $\K\subset \LL$ which 
contains each $\x_i$. Clearly, for 
\iin, the hyperplane $\{\x\in\Ed\st f_i(\x)=1\}$ supports $\K$ at $\x_i$. It 
is an easy exercise to see that the family $2\x_1+\K,2\x_2+\K,\ldots,2\x_n+\K$ 
is a separable Hadwiger configuration of $\K$.

Next, assume that \xfn{} is a set of $n$ \pairs{} satisfying 
\eqref{eq:linearizationFirst} and 
\eqref{eq:linearizationSM}. To show that there is a smooth convex body 
$\K$ with $\Hsep(\K)\geq n$, one may either copy the above proof and make the 
obvious modifications, or use duality: interchange the role of the $\x_i$s with 
that of the $f_i$s, obtain a strictly convex body in the space of linear 
functionals, and then, by polarity obtain a smooth convex body in $\Ed$. We 
leave the details to the reader.

Finally, if \eqref{eq:linearizationFirst}, \eqref{eq:linearizationSC} and 
\eqref{eq:linearizationSM} hold, then in the above con\-struc\-ti\-on of a 
strictly 
convex body, we had that each point of the set $\{\pm 
\x_1,\ldots,\linebreak[0]\pm\x_n\}$ lies in the interior of a facet of $\LL$, 
with no other 
point lying on the same facet. Thus, there is an $\oo$-symmetric, smooth and 
strictly convex body $\K\subset \LL$ which contains each $\x_i$. Clearly, we 
have $\Hsep(\K)\geq n$.
\end{proof}

\begin{note}\label{note:projection}
Let $\K$ be an $\oo$-symmetric, strictly convex body in $\Ed$, and consider a 
separable Hadwiger configuration of $\K$ with $n$ members. Then, by 
Lemma~\ref{lem:linearization}, we have a set of $n$ vector-linear functional 
pairs satisfying \eqref{eq:linearizationFirst} and \eqref{eq:linearizationSC}.

If for each $\iin$, we have that $-\x_i$ is not in the set of vectors, then 
\eqref{eq:linearization} is automatically satisfied. We remark that in this 
case, we may replace $\K$ with a strictly convex \emph{and} smooth body.

If for some $k\neq \ell$ we have $\x_{\ell}=-\x_k$, then 
by \eqref{eq:linearizationFirst}, $f_j(\x_k)=0$ for all 
$j\in[n]\setminus\{k,\ell\}$. Thus, if we remove $(\x_k,f_k)$ and 
$(\x_{\ell},f_{\ell})$ from the set of \pairs, then we 
obtain $n-2$ pairs that still satisfy \eqref{eq:linearizationFirst} and 
\eqref{eq:linearizationSC}, and the linear functionals lie in a 
$(d-1)$-dimensional linear hyperplane. Thus, we may consider the problem of 
bounding their maximum number, $n-2$ in $\E^{d-1}$.

The same dimension reduction argument can be repeated when $\K$ is smooth. 
Thus, in order to bound $\Hsep(\K)$ for smooth \emph{ or } strictly convex 
bodies, it is sufficient to consider smooth \emph{ and } strictly convex 
bodies, and bound $n$ for which there are $n$ vectors with 
linear functionals satisfying \eqref{eq:linearization}.
\end{note}

We will rely on the following basic fact from convexity due to Steinitz 
\cite{St13} in its original form, and then refined later with 
the characterization of the case of equality, see \cite{Re65}.

\begin{lemma}\label{lem:steinitz}
Let $\xn$ be points in $\Ed$ with $\oo\in\inter\conv\{\xn\}$. Then there is a 
subset $A\subseteq\{\xn\}$ of cardinality at most $2d$ with $\oo\in\inter\conv 
A$. 

Furthermore, if the minimal cardinality of such $A$ is $2d$, then $A$ consists 
of the endpoints of $d$ line segments which span $\Ed$, and whose relative 
interiors intersect in $\oo$.
\end{lemma}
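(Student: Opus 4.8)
The plan is to prove Lemma~\ref{lem:steinitz}, a classical result of Steinitz on the "interior Carathéodory number," together with Reay's characterization of the equality case $|A|=2d$.

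First I would establish the basic cardinality bound $|A|\le 2d$. The natural approach is via Carathéodory's theorem applied to the origin as an interior point. Since $\oo\in\inter\conv\{\xn\}$, for every direction $\ve$ there are points of the configuration strictly on the positive side of the hyperplane $\{f=0\}$ for any nonzero functional $f$; equivalently, $\oo$ lies in the interior, so $\oo$ can be written as a convex combination with strictly positive coefficients of an affinely spanning subset. A clean way to get the bound is to first apply ordinary Carathéodory to write $\oo=\sum_{i\in I}\lambda_i\x_i$ with $|I|\le d+1$ and $\lambda_i>0$; this gives $\oo$ in the \emph{relative} interior of a simplex but not necessarily the interior of $\Red$. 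To upgrade to an \emph{interior} point, I would add points to restore full dimension: the standard trick is to note that $\oo\in\inter\conv\{\xn\}$ forces, for each of $d$ coordinate-type directions, the existence of points on both sides, and a careful accounting (splitting into a positive-span argument) yields that $2d$ points suffice. Concretely, I expect to use the positive-basis viewpoint: $\oo\in\inter\conv A$ is equivalent to the vectors in $A$ positively spanning $\Red$, and a positive spanning set can be thinned to one of size at most $2d$ (this is the Davis/Steinitz bound on minimal positive bases), which is exactly the content needed.

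For the equality characterization, the hard part, I would argue that if no subset of size $<2d$ has $\oo$ in its interior-convex-hull but some subset $A$ of size exactly $2d$ does, then $A$ must be a \emph{minimal} positive spanning set of $\Red$, and I would invoke the known structure theorem for minimal positive bases: a positive basis of $\Red$ of cardinality $2d$ is \emph{linearly equivalent} to $\{\pm\ev_1,\ldots,\pm\ev_d\}$, i.e. it consists of $d$ pairs of oppositely-directed vectors spanning $\Red$. Translating back to the convex-geometric statement, the $2d$ points of $A$ are the endpoints of $d$ segments (each segment being a pair $\x,\y\in A$ with $\y$ a negative multiple of $\x$ after recentering at $\oo$) whose relative interiors all pass through $\oo$ and whose directions span $\Red$. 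I would verify minimality of the positive basis from the minimality hypothesis on $|A|$: if $A$ were a non-minimal positive spanning set, one could delete a vector and still positively span, contradicting that no $(2d-1)$-subset works.

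The main obstacle I anticipate is the clean passage between the two languages—"$\oo\in\inter\conv A$" versus "$A$ positively spans $\Red$"—and, within the equality case, rigorously deducing the paired-segment structure from the classification of minimal positive bases rather than merely asserting it. In particular I must rule out degenerate configurations where the $2d$ points realize $\oo$ as an interior point without forming $d$ balanced pairs; this is precisely where Reay's refinement \cite{Re65} does the work, and I would cite it for the equality case while giving the self-contained positive-spanning argument for the bound itself. A secondary care-point is that the lemma as stated selects $A$ from among the \emph{given} points $\x_i$ (no relabeling or perturbation allowed), so each step must preserve membership in $\{\xn\}$; the Carathéodory-style thinning does this automatically since it only deletes points and never creates new ones.
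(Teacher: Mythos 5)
The paper does not actually prove Lemma~\ref{lem:steinitz}: it is quoted as a classical fact, with the cardinality bound attributed to Steinitz \cite{St13} and the equality characterization to Reay \cite{Re65}. Your proposal is therefore not ``the same proof,'' but it is a correct and standard reconstruction. The key step, the equivalence between $\oo\in\inter\conv A$ and ``$A$ positively spans $\Red$'' (both are equivalent, for finite $A$, to $\max_{a\in A}f(a)>0$ for every nonzero linear functional $f$), is sound, and from there the bound $|A|\le 2d$ follows from the Davis--Steinitz theorem that an inclusion-minimal positive spanning set of $\Red$ has at most $2d$ elements; your observation that any $(2d)$-element minimal such $A$ must be an inclusion-minimal positive basis (else a smaller subset would work) correctly reduces the equality case to the classification of positive bases of maximal cardinality $2d$ as $d$ pairs $\{v_i,-\mu_i v_i\}$ with $\{v_i\}$ a linear basis and $\mu_i>0$, which is exactly the paired-segment statement. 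Two caveats: your first attempt via ordinary Carath\'eodory plus ``adding points to restore full dimension'' is not a workable argument as described and should be dropped in favor of the positive-basis route you then adopt; and the structure theorem for $2d$-element positive bases is invoked rather than proved, so your write-up, like the paper's, ultimately rests on a citation (to Reay or Davis) for the hard half. Given that the paper itself supplies no proof, this level of rigor is commensurate, but if a self-contained proof were wanted, that classification is the piece you would still have to supply.
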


\begin{proposition}\label{prop:applysteinitzfirst}
Let \xfn{} be \pairs{} in $\Ed$ satisfying \eqref{eq:linearizationFirst}. 
Assume 
further that $\oo\in\inter\conv\{\xn\}$. Then $n\leq 2d$.

Moreover, if $n=2d$, then the points $\xn$ are vertices of a cross-polytope 
with center $\oo$.
\end{proposition}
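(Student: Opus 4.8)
The plan is to combine the hypothesis $\oo\in\inter\conv\{\xn\}$ with Steinitz's theorem (Lemma~\ref{lem:steinitz}) and then use \eqref{eq:linearizationFirst} to rule out any proper ``core'' subset. By Lemma~\ref{lem:steinitz} there is $A\subseteq\{\xn\}$ with $|A|\le 2d$ and $\oo\in\inter\conv A$; let $I$ index $A$. Since this interior is taken in $\Ed$ and is nonempty, $\conv A$ is full-dimensional, so $\oo\in\operatorname{relint}\conv A$ and hence admits a representation $\oo=\sum_{i\in I}\mu_i\x_i$ with all $\mu_i>0$ and $\sum_{i\in I}\mu_i=1$ (the standard strictly-positive convex representation of an interior point).

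First I would show $A=\{\xn\}$, which immediately gives $n\le 2d$. Suppose some $\x_k\notin A$, i.e. $k\notin I$. Applying $f_k$ to the representation above and using $f_k(\oo)=0$ yields $0=\sum_{i\in I}\mu_i f_k(\x_i)$. Because $k\notin I$, every $i\in I$ satisfies $i\neq k$, so \eqref{eq:linearizationFirst} gives $f_k(\x_i)\le 0$; since each $\mu_i>0$ and the sum vanishes, we must have $f_k(\x_i)=0$ for all $i\in I$. Thus $A$ lies in the hyperplane $\{f_k=0\}$, so $\conv A$ is at most $(d-1)$-dimensional and has empty interior in $\Ed$, contradicting $\oo\in\inter\conv A$. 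Hence no point is omitted, $A=\{\xn\}$, and $n=|A|\le 2d$. The same argument shows that \emph{every} subset whose hull contains $\oo$ in its interior is the full set, so the minimal such subset is $\{\xn\}$ itself.

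For the equality case $n=2d$, I would invoke the refinement in Lemma~\ref{lem:steinitz}: as the minimal cardinality of a subset of $\{\xn\}$ with $\oo$ in the interior of its hull equals $2d$, the points $\xn$ are the endpoints of $d$ segments that span $\Ed$ and whose relative interiors all contain $\oo$. Each such segment has the form $[\x_{a_j},\x_{b_j}]$ with $\oo$ in its relative interior, so $\x_{b_j}=c_j\x_{a_j}$ for some scalar $c_j<0$, and $\{\x_{a_1},\ldots,\x_{a_d}\}$ spans $\Ed$ and is therefore a basis. The last step, which I expect to be the only genuinely delicate point (though still short), is to upgrade $c_j<0$ to $c_j=-1$ using both cross-conditions of \eqref{eq:linearizationFirst}: from $f_{a_j}(\x_{b_j})=c_j f_{a_j}(\x_{a_j})=c_j\in[-1,0]$ we get $c_j\ge -1$, while $f_{b_j}(\x_{a_j})=1/c_j\in[-1,0]$ (using $f_{b_j}(\x_{b_j})=1$) forces $c_j\le -1$. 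Hence $c_j=-1$, the pairs are antipodal, and $\{\xn\}=\{\pm\x_{a_1},\ldots,\pm\x_{a_d}\}$ are the vertices of a cross-polytope centered at $\oo$.
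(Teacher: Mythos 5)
Your proof is correct and follows essentially the same route as the paper's: both use condition \eqref{eq:linearizationFirst} to show that no proper subset of $\{\xn\}$ can contain $\oo$ in the interior of its convex hull (you via the strictly positive convex representation and the hyperplane $\{f_k=0\}$), then invoke Lemma~\ref{lem:steinitz} for the bound $n\le 2d$ and its equality characterization for the cross-polytope conclusion, finishing with the observation that \eqref{eq:linearizationFirst} forces the proportionality constant between paired points to be $-1$. Your write-up simply makes explicit the steps the paper leaves to the reader.
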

\begin{proof}[Proof of Proposition~\ref{prop:applysteinitzfirst}]
By \eqref{eq:linearizationFirst}, for any proper subset $A\subsetneq\{\xn\}$, 
we 
have that the origin is not in the interior of $\conv A$. Thus, by 
Lemma~\ref{lem:steinitz}, $n\leq2d$. 

Next, assume that $n=2d$. Observe that it follows from 
\eqref{eq:linearizationFirst} that if $x_i=\lambda x_j$ for some \ijn{} and 
$\lambda\in\Re$, then $\lambda=-1$.
Thus, combining the argument in the previous paragraph with the second part of 
Lemma~\ref{lem:steinitz} yields the second part of 
Proposition~\ref{prop:applysteinitzfirst}.
\end{proof}

\begin{proposition}\label{prop:edge}
Let \xfn{} be \pairs{} in $\Ed$ satisfying \eqref{eq:linearization}. Assume 
that $\oo\notin\conv\{\xn\}$. Then for any $1\leq k<\ell \leq n$, the triangle 
$\conv\{\oo,\x_k,\x_\ell\}$ is a face of the convex polytope 
$\mathbf{P}:=\conv(\{\xn\}\cup\{\oo\})$.
\end{proposition}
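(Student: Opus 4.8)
The plan is to exhibit $\conv\{\oo,\x_k,\x_\ell\}$ as an exposed face of $\PPP$. Concretely, I would look for a linear functional $g$ on $\Ed$ satisfying $g(\oo)=g(\x_k)=g(\x_\ell)=0$ and $g(\x_j)<0$ for every $j\notin\{k,\ell\}$. Such a $g$ is automatically nonpositive on all of $\PPP$ (being nonpositive on each of the generators $\oo,\xn$), so the hyperplane $\{g=0\}$ supports $\PPP$; moreover, any point of $\PPP$ annihilated by $g$ is a convex combination of the generators it annihilates, namely of $\oo,\x_k,\x_\ell$, since $g$ is nonpositive on all generators and vanishes on the combination. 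Hence $\PPP\cap\{g=0\}=\conv\{\oo,\x_k,\x_\ell\}$, which is exactly what we want.

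To build $g$, I would first use the hypothesis $\oo\notin\conv\{\xn\}$: as this is a compact convex set missing $\oo$, strict separation provides a linear functional $h$ which, after a sign change and since $h(\oo)=0$, satisfies $h(\x_j)>0$ for every $j\in\{1,\dots,n\}$. I then take the ansatz $g=a f_k+b f_\ell-h$ with $a,b\geq 0$. For $j\notin\{k,\ell\}$, condition \eqref{eq:linearization} gives $f_k(\x_j),f_\ell(\x_j)\leq 0$, whence $g(\x_j)\leq -h(\x_j)<0$ for any nonnegative $a,b$; and $g(\oo)=0$ by linearity. The two remaining conditions $g(\x_k)=g(\x_\ell)=0$, using $f_k(\x_k)=f_\ell(\x_\ell)=1$, become the linear system
\[
a+b\,f_\ell(\x_k)=h(\x_k),\qquad a\,f_k(\x_\ell)+b=h(\x_\ell).
\]
Its determinant $1-f_k(\x_\ell)f_\ell(\x_k)$ is positive, since \eqref{eq:linearization} forces $f_k(\x_\ell),f_\ell(\x_k)\in(-1,0]$ and hence their product into $[0,1)$, so the system has a unique solution. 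By Cramer's rule its numerators, for instance $h(\x_k)-f_\ell(\x_k)h(\x_\ell)$ for $a$, are a positive term plus a nonnegative term (as $h(\x_k),h(\x_\ell)>0$ and $f_\ell(\x_k),f_k(\x_\ell)\leq 0$), so in fact $a,b>0$, which validates the nonnegativity assumed above.

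I expect the only genuine difficulty to be locating the right ansatz: one must combine the two supporting functionals $f_k,f_\ell$, which control the values at $\x_k$ and $\x_\ell$, with the separating functional $h$ furnished by $\oo\notin\conv\{\xn\}$, which simultaneously annihilates $\oo$ and pushes the remaining $\x_j$ strictly below zero. Once this combination is in place the verification is routine, and it is precisely here that the \emph{strict} inequalities $f_i(\x_j)>-1$ in \eqref{eq:linearization} (ensuring a nonsingular system) and the \emph{strict} separation $h(\x_j)>0$ (ensuring both the strict sign on the remaining vertices and the positivity of $a,b$) are essential.
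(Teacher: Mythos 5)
Your proof is correct, and it takes a genuinely different (and in one respect stronger) route than the paper's. The paper's own argument is a short contradiction using the single functional $f_k+f_\ell$: if a point $\x=\lambda \x_k+\mu\x_\ell$ with $\lambda,\mu\geq 0$, $0<\lambda+\mu\leq 1$ lay in $\conv\{\x_j\st j\neq k,\ell\}$, then $f_k(\x)+f_\ell(\x)\leq 0$ because $f_k,f_\ell$ are nonpositive on every $\x_j$ with $j\neq k,\ell$, while $f_k(\x)+f_\ell(\x)=\lambda(1+f_\ell(\x_k))+\mu(1+f_k(\x_\ell))>0$ by the strict bound $f_i(\x_j)>-1$ from \eqref{eq:linearization}; this yields a disjointness statement (the punctured triangle misses the hull of the remaining points) and, notably, never invokes $\oo\notin\conv\{\xn\}$ in the computation. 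You instead manufacture an explicit functional $g=af_k+bf_\ell-h$ vanishing exactly on $\oo,\x_k,\x_\ell$ and strictly negative on the other generators, which exhibits $\conv\{\oo,\x_k,\x_\ell\}$ as an \emph{exposed} face of $\mathbf{P}$ --- a formally complete derivation of the literal statement, at the cost of introducing the strict separating functional $h$ and solving a $2\times 2$ system (whose positivity analysis you carry out correctly: the determinant $1-f_k(\x_\ell)f_\ell(\x_k)$ lies in $(0,1]$ and both Cramer numerators are positive). Both proofs ultimately rest on the same two consequences of \eqref{eq:linearization}, namely $f_k,f_\ell\leq 0$ on the remaining $\x_j$ and $f_\ell(\x_k),f_k(\x_\ell)>-1$. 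Two harmless loose ends you could note in passing: when $n=2$ the functional $g$ might vanish identically, but then $\mathbf{P}$ is the triangle itself and the claim is trivial; and the hypothesis $\oo\notin\conv\{\xn\}$ also rules out $\x_k$ and $\x_\ell$ being negative multiples of each other, so that $\conv\{\oo,\x_k,\x_\ell\}$ really is a triangle.
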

\begin{proof}[Proof of Proposition~\ref{prop:edge}]
By \eqref{eq:linearization}, we have that $f_i(\x_j)>-1$ for all \ijn.
Suppose for a contradiction that $\conv\{\x_j\st j\in [n]\setminus\{k,\ell\}\}$ 
contains a point of the form $\x=\lambda \x_k+\mu \x_\ell$ with 
$\lambda,\mu\geq0, 
0<\lambda+\mu\leq1$. By \eqref{eq:linearization}, we 
have 
$f_k(\x), f_\ell(\x)\leq 0$. Thus,
\[
 0\geq f_k(\x)+f_\ell(\x)=
 \lambda(1+f_\ell(\x_k))+\mu(1+f_k(\x_\ell))>0,
\]
a contradiction.
\end{proof}

\section{Proofs of Theorems~\ref{thm:smoothstrictlycvx} and 
\ref{thm:strict}}\label{sec:thmproofs}

\begin{proof}[Proof of Theorem~\ref{thm:strict}]
To prove part (\ref{item:twosided}), we will use induction on $d$, the base 
case, $d=1$ being trivial.
By the dimension-reduction argument in Note~\ref{note:projection}, we may 
assume that there are $n$ \pairs{} \xfn{} satisfying \eqref{eq:linearization}. 

If $\oo\notin\conv\{\xn\}$, and $\oo\notin\conv\{\fn\}$, then, clearly, 
$n\leq\hsep(d)$.

Thus, we may assume that $\oo\in\conv\{\xn\}$.
We may also assume that $F=\conv\{\x_1,\ldots,\x_k\}$ is the face of the 
polytope 
$\conv\{\xn\}$ that \emph{supports} $\oo$, that is the face which contains 
$\oo$ in its relative interior.
Let $H:=\spann F$. If $H$ is the entire space $\Ed$, then 
$\oo\in\inter\conv\{\xn\}$ and hence, 
$n\leq2d$ follows from Proposition~\ref{prop:applysteinitzfirst}.

On the other hand, if $H$ is a proper linear subspace of $\Ed$, then clearly, 
for any $i>k$, we have that $f_i$ is identically zero on $H$.

Applying Proposition~\ref{prop:applysteinitzfirst} on $H$ for $\{\x_i\st i\leq 
k\}$ with $\{f_i|_{H}\st i\leq k\}$, we have 
\begin{equation}\label{eq:kleqtwodim}
k\leq 2\dim H. 
\end{equation}

Denote by $H^{\perp}$ the orthogonal 
complement of $H$, and by $P$ the orthogonal projection of $\Ed$ onto 
$H^{\perp}$. 
It is not hard to see that $P$ is one-to-one on the set $\{\x_i\st i>k\}$.
Moreover, the set of points $\{P\x_i\st i>k\}$, with 
linear functionals $\{f_i|_{H^{\perp}}\st i>k\}$ restricted to $H^{\perp}$, 
satisfy \eqref{eq:linearization} in $H^{\perp}$. 

Combining \eqref{eq:kleqtwodim} with the induction hypothesis applied on 
$H^\perp$, we complete the proof of part (\ref{item:twosided}).

For the three-dimensional bound in part (\ref{item:onesided}), suppose that 
$\oo\notin\linebreak[0]\conv\{\x_1,\linebreak[0]\ldots,\linebreak[0]\x_4\}
\in\E^3$. By Radon's 
lemma, the set 
$\{\oo,\linebreak[0]\x_1,\linebreak[0]\ldots,\linebreak[0]\x_4\}$ admits a 
partition 
into two parts whose convex hulls intersect contradicting 
Proposition~\ref{prop:edge}. The same proof yields the two and the 
four-dimensional 
statements, while the one-dimensional claim is trivial.

We use a projection argument to prove part (\ref{item:onesidedeuclidean}). 
Assume that $\x_1,\linebreak[0]\ldots,\linebreak[0]\x_n$ is a set of Euclidean 
unit vectors with 
$\iprod{\x_i}{\x_j}\in(-1,0]$ for all $\ijn$. Furthermore, let $\y$ be a unit 
vector with $\iprod{\y}{\x_i}>0$ for all \iin. Consider the set of vectors
$\x_i^{\prime}:=\x_i-\iprod{\y}{\x_i}\y$, $i=1,\ldots,n$, all lying in the 
hyperplane $\y^{\perp}$. Now, for \ijn, we have
\begin{equation*} 
\iprod{\x_i^{\prime}}{\x_j^{\prime}}=\iprod{\x_i}{\x_j}-\iprod{\y}{\x_i}\iprod{
\y}{\x_j}<0.
\end{equation*}
Thus, $\x_i^{\prime}, i=1,\ldots,n$ form a set of $n$ vectors in a 
$(d-1)$-dimensional space with pairwise obtuse angles. It is known \cite{DH, 
Ra, Ku}, 
or may be proved using the same projection argument and induction on the 
dimension (projecting orthogonally to $(\x_n^{\prime})^{\perp}$) that $n\leq d$ 
follows.
\end{proof}

\begin{example}\label{ex:fivedonesidedex}
By Lemma~\ref{lem:linearization}, it is sufficient to exhibit 6 vectors (with their
convex hull not containing $\oo$ in $\E^5$) and 
corresponding linear functionals satisfying \eqref{eq:linearization}.
Let the unit vectors $\ve_4,\ve_5,\ve_6$ be the vertices 
of an equilateral triangle centered at $\oo$ in the linear plane 
$\spann\{\ev_4,\ev_5\}$ of $\E^5$.
Let $\x_i=\ev_i$, for $i=1,2,3$, and 
let $\x_i=(\ev_1+\ev_2+\ev_3)/3 +\ve_i$, for $i=4,5,6$.
Observe that $\oo\notin\allowbreak\conv\{\x_1,\ldots,\x_6\}$, 
as 
$\iprod{\ev_1+\ev_2+\ev_3}{\x_i}>0$ for $i=1,\ldots,6$.

We define the following linear functionals.

$f_1(\x)=\linebreak[0]\iprod{\ev_1-\frac{\ev_2+\ev_3}{2}}{\x}$, 
$f_2(\x)=\linebreak[0]\iprod{\ev_2-\frac{\ev_1+\ev_3}{2}}{\x}$, 
$f_3(\x)\linebreak[0]=\linebreak[0]\iprod{\ev_3-\frac{\ev_1+\ev_2}{2}}{\x}$,
 and 
$f_i(\x)=\linebreak[0]\iprod{\ve_i}{\x}$, for $i=4,5,6$.
Clearly, \eqref{eq:linearization} holds.
\end{example}

\begin{proof}[Proof of Theorem~\ref{thm:smoothstrictlycvx}]
First, we prove part (\ref{item:threefourdim}).
If the origin is in the interior of the convex hull of the translation vectors, 
then Proposition~\ref{prop:applysteinitzfirst} yields $n\leq2d$ and the 
characterization of equality. In the case when $\oo\notin\inter\conv\{\x_i\}$, 
Theorem~\ref{thm:strict} combined with Note~\ref{note:projection}
yields $n<2d$.

The proof of part (\ref{item:DG}) follows closely a classical proof of Danzer 
and Gr\"unbaum on the maximum size of an antipodal set in $\E^d$ \cite{DG62}.

By Lemma~\ref{lem:linearization} and Note~\ref{note:projection}, we may assume 
that $\K$ is an $\oo$-symmetric smooth strictly convex body in $\E^d$. Assume 
that $2\x_1+\K,2\x_2+\K,\ldots,2\x_n+\K$ is a separable 
Hadwiger configuration of $\K$, where $\x_1,\dots ,\x_n\in\bd \K$.
Let $f_i$ denote the linear functional 
corresponding to the hyperplane that separates $\K$ from $2\x_i+\K$.

For each \iin, let $\K_i$ be the set that we obtain by applying a homothety of 
ratio $1/2$ with center $\x_i$ on the set $\K\cap\{\x\in\Ed\st f_i(\x)\geq 
0\}$, 
that is,
\[
\K_i:=\frac{1}{2}\left(\K\cap\{\x\in\Ed\st f_i(\x)\geq 
0\}\right)+\frac{\x_i}{2}.
\]
These sets are pairwise non-overlapping. 
In fact, it is easy to see that the following even stronger statement holds:  
\[
\left(\mu \x_i+{\rm int}\left(\frac{1}{2}\K\right)\right)\cap\left(\bigcup_{ 
j\neq i}\K_j\right)=\emptyset
\]
for any $\mu\geq 0$ and $1\leq i\leq n$.
On the other hand, $\vol{\K_i}=2^{-(d+1)}\vol{\K}$ by the 
central symmetry of $\K$, 
where $\vol\cdot$ stands for the $d$-dimensional volume of the 
given set.
We remark that -- unlike in the proof of the main 
result of \cite{DG62} by Danzer and Gr\"unbaum -- the sets $\K_i$ 
are not translates 
of each other. Since each $\K_i$ is contained in 
$\K\setminus\inter\left(\frac{1}{2}\K\right)$, we immediately 
obtain the bound $n\leq 2^{d+1}-2$.

To decrease the bound further, replace $\K_1$ by 
\[
\widehat{\K}_1:=\K\cap\{\x\in\Ed\st f_1(\x)\geq 
1/2\}.
\]
Now, $\widehat{\K}_1,\K_2,\ldots,\K_n$ are still pairwise non-overlapping, and
are contained in $\K\setminus\inter\left(\frac{1}{2}\K\right)$.
The smoothness of $\K$ yields $\widehat{\K}_1\supsetneq \K_1$, and hence, 
$\vol{\widehat{\K}_1}>2^{-(d+1)}\vol{\K}$.
This completes the proof of part (\ref{item:DG}) of 
Theorem~\ref{thm:smoothstrictlycvx}.

\end{proof}

\section{Proof of Theorem~\ref{thm:contactno}}\label{sec:contactproof}

We define a local version of a totally separable packing.

\begin{definition}
Let $\PP:=\{\x_i+\K\st i\in I\}$ be a finite or infinite packing of translates 
of $\K$, and $\rho>0$. We say that $\PP$ is \emph{$\rho$-separable} if for each 
$i\in I$ we have that the family $\{\x_j+\K\st j\in I, 
\x_j+\K\subset\x_i+\rho\K\}$
is a totally separable packing of translates of $\K$. Let $\dsep$ denote the 
largest density of a $\rho$-separable packing of translates of $\K$, that is,
\begin{equation*}
 \dsep:=\sup_{\PP}\limsup_{\lambda\rightarrow\infty}\frac{\sum\limits_{i: 
\x_i+\K\subset[-\lambda,\lambda]^d}\vol{\x_i+\K}}{(2\lambda)^d},
\end{equation*}
where the supremum is taken over all $\rho$-separable packings $\PP$ of 
translates of $\K$.
\end{definition}

We quote Lemma~1 of \cite{BeLa18}.

\begin{lemma}\label{lem:BeLa18}
 Let $\{\x_i+\K\st\iin\}$ be a $\rho$-separable packing of translates of an 
$\oo$-symmetric convex body $\K$ in $\Ed$ with $\rho\geq 1, n\geq 1$ and 
$d\geq2$. Then
\begin{equation*}
 \frac{n\vol{\K}}{\vol{\bigcup\limits_{\iin} \x_i+2\rho \K}}
\leq \dsep.
\end{equation*}

\end{lemma}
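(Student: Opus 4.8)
The plan is to deduce the inequality directly from the \emph{definition} of $\dsep$, by manufacturing out of the given finite $\rho$-separable packing an infinite $\rho$-separable packing whose density is at least $n\vol{\K}/\vol{U}$, where $U:=\bigcup_{1\leq i\leq n}(\x_i+2\rho\K)$. Since $\dsep$ is by definition the supremum of densities of $\rho$-separable packings of translates of $\K$, producing one such packing of density at least $n\vol{\K}/\vol{U}$ immediately yields $n\vol{\K}/\vol{U}\leq\dsep$, which is exactly the asserted inequality. Thus the whole problem reduces to a single periodic construction together with a check that it inherits $\rho$-separability.

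To build the infinite packing I would replicate the finite configuration along a lattice $\Lambda$: placing a translated copy $\{\x_i+\K+\ell:1\leq i\leq n\}$ in each cell $\ell+D$, where $D$ is a fundamental parallelepiped of $\Lambda$, produces a periodic family $\PP^{*}$ whose density equals $n\vol{\K}/\det\Lambda$. The lattice must serve two competing purposes. To make the density at least $n\vol{\K}/\vol{U}$ I need $\det\Lambda\leq\vol{U}$, which I would arrange by choosing $\Lambda$ so that the translates $\{U+\ell:\ell\in\Lambda\}$ cover $\Ed$; a fundamental domain can then be carved inside a single translate of $U$, giving $\det\Lambda\leq\vol{U}$. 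At the same time $\Lambda$ must be coarse enough that $\PP^{*}$ is genuinely a packing and is $\rho$-separable.

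The packing property and, more delicately, the $\rho$-separability of $\PP^{*}$ are where the factor $2\rho$ (rather than $\rho$) in the definition of $U$ does its work. Taking a fundamental cell whose width measured in $\K$ is at least $2\rho$, any region $\x_i+\ell+\rho\K$ meets only boundedly many cells, and the flat walls separating neighbouring cells are hyperplanes disjoint from the interior of every body, since each body lies strictly inside its own cell. Hence the members of $\PP^{*}$ contained in a given $\x_i+\ell+\rho\K$ split, according to the cell containing them, into groups that are pairwise separated by cell walls; within each group they form a sub-family of a translate of the original configuration and are therefore totally separable by the hypothesis that the finite packing is $\rho$-separable. Stitching these separators together exhibits the local family as totally separable, so $\PP^{*}$ is $\rho$-separable.

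I expect the main obstacle to be precisely this reconciliation: forcing $\det\Lambda$ to be small, a covering condition on the $2\rho$-dilates, while keeping the replicated copies both non-overlapping and locally separable, a spacing condition. The dilation by $2\rho$ is exactly the slack that allows both demands to be satisfied at once, and getting the bookkeeping of the fundamental domain and the covering right is the technical heart of the argument. Once $\PP^{*}$ is verified to be an infinite $\rho$-separable packing of density at least $n\vol{\K}/\vol{U}$, appealing to the definition of $\dsep$ completes the proof.
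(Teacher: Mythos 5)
A preliminary remark on the comparison you asked for: the paper gives no proof of this lemma at all --- it is quoted verbatim as Lemma~1 of \cite{BeLa18} --- so your argument has to stand entirely on its own.

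It does not, and the gap is exactly at the point you flag as ``the technical heart'': the two demands you place on the lattice $\Lambda$ are incompatible in general, and the dilation by $2\rho$ does not supply the needed slack. Write $C=\{\x_1,\dots,\x_n\}$ and $U=\bigcup_i(\x_i+2\rho\K)$. For the replicated family $\{C+\K+\ell:\ell\in\Lambda\}$ to be a packing you need, at the very least, that no nonzero lattice vector lies in $(C-C)+\inter(2\K)$; your stronger requirement that each copy sit strictly inside its own fundamental cell demands even more. Now take $\K=\B^2$ and let $C$ be a cross: unit disks centred at $(2i,0)$ and $(0,2j)$ for $i,j\in\{-m,\dots,m\}$, so $n=4m+1$. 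This is a totally separable, hence $\rho$-separable, packing for every $\rho$ (the lines $x=2i+1$, $y=2j+1$ miss all interiors and separate any two members). Here $C-C$ contains the full grid $\{(2i,2j):|i|,|j|\le m\}$, so $(C-C)+\inter(2\B^2)$ contains the square $[-2m,2m]^2$, and by Minkowski's theorem every lattice avoiding that square off the origin has $\det\Lambda\ge 4m^2$. Meanwhile $\vol[2]{U}\le (4m+1)\cdot 4\pi\rho^2=O(\rho^2 m)$. Hence for $m$ large, \emph{every} lattice that makes your periodic family even a packing --- never mind $\rho$-separable, never mind confining each copy to one cell --- has $\det\Lambda\gg\vol[2]{U}$, so the density $n\vol[2]{\K}/\det\Lambda$ you obtain falls far short of the target $n\vol[2]{\K}/\vol[2]{U}$, and no choice of $\Lambda$ can certify the inequality. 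The cell-wall separation argument fails for the same reason: no parallelepiped of area $O(m)$ contains the cross, so the bodies of a single copy necessarily straddle many cells.

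The structural reason the approach cannot be repaired within the lattice framework is that $\vol{U}$ measures only the union of the $2\rho\K$-neighbourhoods and is completely insensitive to how spread out $C$ is, whereas any lattice periodization must pay for the difference body $(C+\K)-(C+\K)$, whose volume can be of order $n^2\vol{\K}$ while $\vol{U}$ is of order $n\vol{\K}$. A correct proof has to pack copies of the configuration against the (generally non-convex, non-symmetric) set $U$ itself rather than against a fundamental parallelepiped --- e.g.\ by a saturation/greedy or averaging argument --- and even then one must take care not to lose a factor of $n$ when estimating the measure of positions blocked by already-placed copies. For these reasons the lemma is genuinely nontrivial, which is presumably why the paper imports it from \cite{BeLa18} rather than proving it.
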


\begin{lemma}\label{lem:lsep}
 Let $\K$ be a smooth $\oo$-symmetric convex body in $\Ed$ with $\difour$. 
Then there is a $\lambda>0$ such that for any separable Hadwiger 
configuration $\{\K\}\cup\{\x_i+\K\st i=1,\ldots,2d\}$ of $\K$,
\begin{equation}\label{eq:lsep}
 \lambda \K\subseteq \bigcup_{i=1}^{2d} (\x_i+\lambda\K).
\end{equation}
holds. In particular, \eqref{eq:lsep} holds with $\lambda=2$ when $d=2$.
\end{lemma}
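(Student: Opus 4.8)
The plan is to reduce the covering statement \eqref{eq:lsep}, via Theorem~\ref{thm:smoothstrictlycvx}(\ref{item:threefourdim}), to a statement about pushing points of $\K$ slightly inward along a basis of directions, and then to produce a uniform admissible step size by a compactness argument that exploits smoothness. By Theorem~\ref{thm:smoothstrictlycvx}(\ref{item:threefourdim}) — which is exactly where the restriction $\difour$ enters — in any separable Hadwiger configuration of $\K$ with $2d$ translates I may, after relabeling, write the translates as $\{\,2\bb_i+\K,\ -2\bb_i+\K\ :\ i=1,\dots,d\,\}$, where $\bb_1,\dots,\bb_d\in\bd\K$ form an Auerbach basis of $\K$. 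Dividing the desired inclusion \eqref{eq:lsep} by $\lambda$ and writing $\mu=2/\lambda$, it suffices to find $\mu>0$, depending on $\K$ only (hence uniform over all Auerbach bases of $\K$), with
\[
\K\ \subseteq\ \bigcup_{i=1}^d\Big((\mu\bb_i+\K)\cup(-\mu\bb_i+\K)\Big).
\]
Equivalently, I must show that every $\mathbf p\in\K$ can be pushed into $\K$ along one of the directions $\pm\bb_i$, that is, there are $i$ and $s\in\{+1,-1\}$ with $\mathbf p-s\mu\bb_i\in\K$.

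For the general case $\difour$ I would argue as follows. Fix $\mathbf p\in\bd\K$. Since $\K$ is smooth it has a unique supporting functional $g_{\mathbf p}$ at $\mathbf p$, and since $\{\bb_1,\dots,\bb_d\}$ is a basis while $g_{\mathbf p}\neq\oo$, some $g_{\mathbf p}(\bb_i)\neq0$. Choosing $i$ to maximize $|g_{\mathbf p}(\bb_i)|$ and $s=\operatorname{sign}g_{\mathbf p}(\bb_i)$, the direction $-s\bb_i$ points strictly into $\K$. To turn this pointwise fact into a uniform step I would use that smoothness makes the gauge $\|\cdot\|_{\K}$ continuously differentiable away from $\oo$, with gradient $g_{\mathbf p}$ at $\mathbf p\in\bd\K$; hence its directional derivative along $-s\bb_i$ equals $-|g_{\mathbf p}(\bb_i)|\le -c_0<0$, where $c_0$ is a lower bound for $\max_i|g_{\mathbf p}(\bb_i)|$ taken over the compact set of pairs (boundary point, Auerbach basis). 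A single $\mu$ then serves all $\mathbf p$ and all Auerbach bases, by uniform continuity of the gradient on a compact annulus; points in the interior are covered by the same inequality (or, well inside $\K$, by the triangle inequality). Setting $\lambda=2/\mu$ finishes the general case.

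The step I expect to be the main obstacle is precisely this uniformity. A strictly inward direction among the $\pm\bb_i$ exists at every boundary point for free, but I must prevent the admissible step size from degenerating to $0$ as $\mathbf p$ runs over $\bd\K$ and as the Auerbach basis varies; this is what forces me to invoke compactness of $\bd\K$ and of the set of Auerbach bases, together with the continuity of the Gauss map guaranteed by smoothness. It is also the reason the statement is confined to $\difour$: only there does Theorem~\ref{thm:smoothstrictlycvx}(\ref{item:threefourdim}) force the clean $\pm2\bb_i$ structure on which the reduction rests.

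For $d=2$ I would instead obtain the sharp value $\lambda=2$ (that is, $\mu=1$) by a direct computation in Auerbach coordinates. Normalizing so that $\bb_1=\ev_1$ and $\bb_2=\ev_2$, one has $\conv\{\pm\ev_1,\pm\ev_2\}\subseteq\K\subseteq[-1,1]^2$, and I may write the upper part of $\bd\K$ as the graph of a concave function $u$ on $[-1,1]$ with $u(\pm1)=0$ and $u(0)=1$, central symmetry giving lower boundary $-u(-x)$. After reflecting coordinates (which keeps $\K$ in the same class, so that the conclusion transfers to one of the four moves $\mathbf p\pm\ev_i$) I may assume a given $\mathbf p=(p_1,p_2)\in\K$ has $p_1,p_2\ge0$. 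The short check I would carry out is that $\mathbf p-\ev_1\notin\K$ forces $p_2>u(p_1-1)$ and $\mathbf p-\ev_2\notin\K$ forces $p_2<1-u(-p_1)$, so if both failed then $u(p_1-1)+u(-p_1)<1$; but the concavity chord bound $u(x)\ge x+1$ on $[-1,0]$ gives $u(p_1-1)\ge p_1$ and $u(-p_1)\ge 1-p_1$, whose sum is at least $1$, a contradiction. Hence one of $\mathbf p-\ev_1,\mathbf p-\ev_2$ lies in $\K$, which is the required inclusion with $\mu=1$, i.e. \eqref{eq:lsep} with $\lambda=2$.
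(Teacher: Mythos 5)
Your proof is correct and follows essentially the same route as the paper's: reduce via Theorem~\ref{thm:smoothstrictlycvx}(\ref{item:threefourdim}) to the Auerbach-basis structure, use smoothness to find, at each boundary point, a direction among $\pm\bb_i$ pointing into the interior, and obtain a uniform $\lambda$ by compactness of the boundary and of the set of Auerbach bases. Your explicit supporting-functional/gauge argument and the concavity computation for $d=2$ merely fill in steps the paper asserts or leaves as an exercise.
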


\begin{definition}
We denote the smallest $\lambda$ satisfying \eqref{eq:lsep} by $\lsep$, and 
note that $\lsep\linebreak[0]\geq 2$, since otherwise $\bigcup_{i=1}^{2d} 
(\x_i+\lambda\K)$ does not contain $\oo$. 
\end{definition}

\begin{proof}[Proof of Lemma~\ref{lem:lsep}]
Clearly, $\lambda$ satisfies \eqref{eq:lsep} if and only if, 
for each boundary point $\bb\in\bd(\K)$ we have that at least one 
of the $2d$ points $\bb-\frac{2}{\lambda}\x_i$ is in $\K$.

First, we fix a separable Hadwiger configuration of $\K$ with 
$2d$ members and show that for some $\lambda>0$, 
\eqref{eq:lsep} holds.
By Theorem~\ref{thm:smoothstrictlycvx}, we have that $\{\x_i\st 
i=1,\ldots,2d\}$ is an Auerbach basis of $\K$, and, in particular, the 
origin is in the interior of $\conv\{\x_i\st i=1,\ldots,2d\}$. It follows from 
the smoothness of $\K$ that for each boundary point 
$\bb\in\bd(\K)$ we have that 
at least one of the $2d$ rays $\{\bb-t\x_i\st t>0\}$ intersects the interior of 
$\K$. The existence of $\lambda$ now follows from the compactness of $\K$.

Next, since the set of Auerbach bases of $\K$ is compact (consider 
them as points in $\K^d$), it follows in a straightforward way that there is a 
$\lambda>0$, for which \eqref{eq:lsep} holds for all separable Hadwiger 
configurations of $\K$ with $2d$ members.

To prove the part concerning $d=2$, we make use of the characterization of 
the equality case in Part (\ref{item:threefourdim}) of 
Theorem~\ref{thm:smoothstrictlycvx}. An Auerbach basis of a planar 
$\oo$-symmetric convex body $\K$ means that $\K$ is contained in an 
$\oo$-symmetric parallelogram, the midpoints of whose edges are
$\pm\x_1,\pm\x_2$, and $\pm\x_1,\pm\x_2\in\K$. We leave it as an exercise to 
the reader that in this case, for each boundary point 
$\bb\in\bd(\K)$ we have that at least one of the $4$ 
points $\bb\pm\frac{\x_1}{2},\bb\pm\frac{\x_2}{2}$ is in $\K$.
\end{proof}

We denote the $(d-1)$-dimensional Hausdorff measure by $\vol[d-1]{\cdot}$, and 
the 
\emph{isoperimetric ratio} of a bounded set $\SSS\subset\Ed$ for 
which it is 
defined as
\[
 \Iq{\SSS}:=\frac{(\vol[d-1]{{\rm 
bd}\SSS})^d}{(\vol{\SSS})^{d-1}},
\]
and recall the \emph{isoperimetric inequality}, according to which it is 
minimized by Euclidean balls, that is, $\Iq{\B^d}\leq\Iq{\SSS}$ 
for any bounded 
set 
$\SSS\subset\Ed$, for which $\Iq{\SSS}$ is defined.

Finally, we are ready to state our main result, from which 
Theorem~\ref{thm:contactno} immediately follows.

\begin{theorem}\label{thm:contactnoPrecise}
 Let $\K$ be a smooth $\oo$-symmetric convex body in $\Ed$ with $\difour$. 
Then
 \begin{equation*}
  \csep\leq
 \end{equation*}
 \begin{equation*}
  dn-\frac{n^{(d-1)/d}}{2\left[\lsep\right]^{d-1}
  \left[\dsep[\frac{\lsep}{2},\K]\right]^{(d-1)/d}}
  \left[\frac{\Iq{\B^d}}{\Iq{\K}}\right]^{1/d}\leq
 \end{equation*}
 \begin{equation*}
  dn-\frac{n^{(d-1)/d} (\vol{\B^d})^{1/d}}{4\left[\lsep\right]^{d-1}}
 \end{equation*}
 for all $n>1$.
 
 In particular, in the plane, we have
 \begin{equation*}
  \csep\leq 2n-\frac{\sqrt{\pi}}{8}\sqrt{n}
  \end{equation*}
 for all $n>1$.
\end{theorem}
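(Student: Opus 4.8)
The plan is to bound the degree deficiency of the contact graph. Fix an arbitrary totally separable packing $\PP=\{\x_i+\K\st i=1,\ldots,n\}$; by Theorem~\ref{thm:smoothstrictlycvx} every vertex has degree at most $2d$. Call a translate \emph{deficient} if its degree is strictly less than $2d$, and let $D$ be the number of deficient translates of $\PP$. Counting edge–endpoints gives $2c=\sum_i\deg(\x_i+\K)\leq 2dn-D$, where $c$ is the contact number of $\PP$, so $c\leq dn-\tfrac{D}{2}$. Hence everything reduces to a lower bound of the form $D\geq \textnormal{(const)}\cdot n^{(d-1)/d}$ valid for every packing; this will give the bound for $\csep$. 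By the reduction of Note~\ref{note:projection} we may also assume that $\K$ is strictly convex, which is what the support argument below will use.

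The geometric heart is to pass to the dilates and their union $U:=\bigcup_{i=1}^n(\x_i+\lsep\K)$, and to show that $\bd U$ is covered, up to a set of $(d-1)$-measure zero, by the boundaries of the \emph{deficient} dilates alone. Indeed, if $\x_i+\K$ is non-deficient then its $2d$ neighbours form a separable Hadwiger configuration, so Lemma~\ref{lem:lsep} gives $\x_i+\lsep\K\subseteq\bigcup_{j\sim i}(\x_j+\lsep\K)\subseteq U$. Now take a point $p\in\bd U$ at which $U$ has an outer unit normal $u$; almost every boundary point is such, since $U$ is a finite union of smooth convex bodies. Then $p\in\bd(\x_i+\lsep\K)$ for some $i$, and $u$ supports $\x_i+\lsep\K$ at $p$. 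Were $i$ non-deficient, the covering above would produce a neighbour $j$ with $p\in\x_j+\lsep\K\subseteq U$; as $p\in\bd U$, necessarily $p\in\bd(\x_j+\lsep\K)$ and $u$ supports it there as well. But two distinct translates of a smooth strictly convex body cannot share the contact point of a common outer normal, a contradiction. Thus $i$ is deficient, and summing the full surface of each deficient dilate yields
\begin{equation*}
 \vol[d-1]{\bd U}\leq D\,[\lsep]^{d-1}\,\vol[d-1]{\bd\K}.
\end{equation*}

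The remaining estimates are analytic. Applying Lemma~\ref{lem:BeLa18} with $\rho=\tfrac{\lsep}{2}\geq 1$, so that $2\rho\K=\lsep\K$, bounds the volume of $U$ from below,
\begin{equation*}
 \vol{U}\geq\frac{n\vol{\K}}{\dsep[\frac{\lsep}{2},\K]},
\end{equation*}
while the isoperimetric inequality gives $\vol[d-1]{\bd U}\geq[\Iq{\B^d}]^{1/d}\,(\vol{U})^{(d-1)/d}$. Combining the three displays, substituting $\vol[d-1]{\bd\K}=[\Iq{\K}]^{1/d}(\vol{\K})^{(d-1)/d}$, and solving for $D$ produces exactly the first displayed bound of the theorem.

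Finally, to reach the shape-independent second bound I would use that $\csep$, $\lsep$ and $\dsep$ are all invariant under invertible linear maps, whereas $\Iq{\K}$ is not. By the reverse isoperimetric inequality, with the cube extremal among $\oo$-symmetric bodies, one may replace $\K$ by a linear image for which $\Iq{\K}\leq(2d)^d$ (the value attained by the cube); since $\Iq{\B^d}=d^d\vol{\B^d}$, this forces $[\Iq{\B^d}/\Iq{\K}]^{1/d}\geq(\vol{\B^d})^{1/d}/2$. Using $\dsep[\frac{\lsep}{2},\K]\leq 1$, the first bound then degrades to $dn-\tfrac{n^{(d-1)/d}(\vol{\B^d})^{1/d}}{4[\lsep]^{d-1}}$, and the planar statement is the specialisation $d=2$, $\vol{\B^2}=\pi$, together with $\lsep=2$ from Lemma~\ref{lem:lsep}. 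I expect the covering claim for $\bd U$ to be the main obstacle: it is where smoothness, strict convexity, and the precise constant $\lsep$ all enter, and making the ``almost every normal'' reduction and the two-translates argument fully rigorous, including the lower-dimensional pieces where $\bd U$ fails to be smooth, is the delicate point.
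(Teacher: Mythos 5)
Your proof follows the same route as the paper's: count the degree deficiency $D=n-m$ to get $c\leq dn-\tfrac{D}{2}$, use Lemma~\ref{lem:lsep} to show that the boundary of $U=\bigcup_i(\x_i+\lsep\K)$ is charged only to the deficient translates, bound $\vol[d-1]{\bd U}$ from below by the isoperimetric inequality together with the volume estimate of Lemma~\ref{lem:BeLa18}, and then pass to the affine-invariant second bound via Ball's reverse isoperimetric inequality. Your numerical bookkeeping ($\Iq{\B^d}=d^d\vol{\B^d}$, $\dsep[\frac{\lsep}{2},\K]\leq 1$, $\lsep=2$ in the plane) is correct and in fact cleaner than the paper's, which contains a small typo at this point.

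There is, however, one genuine gap. You write that ``by the reduction of Note~\ref{note:projection} we may also assume that $\K$ is strictly convex,'' and you then use strict convexity at the decisive step (``two distinct translates of a smooth strictly convex body cannot share the contact point of a common outer normal''). Note~\ref{note:projection} is a reduction for bounding the \emph{separable Hadwiger number}: it replaces $\K$ by a different body with at least as large an $\Hsep$. It cannot be invoked here, because Theorem~\ref{thm:contactnoPrecise} is asserted for every smooth $\oo$-symmetric $\K$ (strictly convex or not), and every quantity in the bound --- $\lsep$, $\dsep[\frac{\lsep}{2},\K]$, $\vol[d-1]{\bd\K}$, $\Iq{\K}$ --- as well as the packing itself is tied to the original body; replacing $\K$ changes all of them. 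Without strict convexity your key step genuinely fails: if $\bd\K$ contains a flat piece and $\x_i-\x_j$ is parallel to it, two distinct translates $\x_i+\lsep\K$ and $\x_j+\lsep\K$ can share a set of positive $(d-1)$-measure on which they have a common outer normal. So your argument, as written, proves the surface-area inequality
\begin{equation*}
\vol[d-1]{\bd U}\leq D\left[\lsep\right]^{d-1}\vol[d-1]{\bd\K}
\end{equation*}
only for smooth \emph{and strictly convex} $\K$. To be fair, the paper itself derives its inequality \eqref{eq:lsepapplied} from Lemma~\ref{lem:lsep} in a single line and does not spell out how the flat-face case is handled either; but it does not claim the (invalid) strict-convexity reduction, so the burden of treating smooth non-strictly-convex bodies remains with you. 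You correctly identified the covering claim as the delicate point --- the missing piece is an argument that charges the flat portions of $\bd U$ shared by several non-deficient translates back to deficient ones, rather than an appeal to strict convexity.
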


\begin{proof}[Proof of Theorem~\ref{thm:contactnoPrecise}]
Let $\PP=C+\K$ be a totally separable packing of translates of $\K$, where $C$ 
denotes the set of centers $C=\{\x_1,\ldots,\x_n\}$. Assume that $m$ of the $n$ 
translates is touched by the maximum number, that is, by 
Theorem~\ref{thm:smoothstrictlycvx}, $\Hsep(\K)=2d$ others. By 
Lemma~\ref{lem:lsep}, we have 
\begin{equation}\label{eq:lsepapplied}
\vol[d-1]{\bd\left( C+\lsep \K  \right)}\leq 
\end{equation}
\[
 (n-m)(\lsep)^{d-1}\vol[d-1]{\bd(\K)}.
\]

By the isoperimetric inequality, we have
\begin{equation}\label{eq:isoperimetric}
 \Iq{\B^d}\leq 
 \Iq{C+\lsep \K}=
 \frac{\left(\vol[d-1]{\bd\left( C+\lsep \K  
\right)}\right)^d}{\left(\vol{ C+\lsep \K}\right)^{d-1}}.
\end{equation}
Combining \eqref{eq:lsepapplied} and \eqref{eq:isoperimetric} yields
\[
 n-m\geq
 \frac{(\Iq{\B^d})^{1/d}\left[\vol{ C+\lsep 
\K}\right]^{(d-1)/d}}{(\lsep)^{d-1}\vol[d-1]{\bd \K}}.
\]
The latter, by Lemma~\ref{lem:BeLa18} is at least
\[
 \frac{(\Iq{\B^d})^{1/d}
 \left[
 \frac{n\vol{\K}}{\dsep[\lsep/2,\K]}
\right]^{(d-1)/d}}{(\lsep)^{d-1}\vol[d-1]{\bd \K}}.
\]
After rearrangement, we obtain the desired bound on $n$ completing the proof of 
the first inequality in Theorem~\ref{thm:contactnoPrecise}.

To prove the second inequality, we adopt the proof of \cite[Corollary~1]{Be02}.
First, note that $\dsep[\frac{\lsep}{2},\K]\leq 1$, and 
$(\Iq{\B^d})^{1/d}=d\vol{\B^d}$. Next, 
according to Ball's reverse isoperimetric inequality 
\cite{Ba91}, for any convex body $\K$, there is a non-degenerate affine map 
$T:\Ed\rightarrow\Ed$ with $\Iq{T\K}\leq(2d)^d$. Finally, notice that 
$\csep=\csep[T\K,n]$, and the inequality follows in a straightforward way.

The planar bound follows by substituting the value $\lsep=2$ from 
Lemma~\ref{lem:lsep}.
\end{proof}

\section{Remarks}\label{sec:remarks}

Lemma~\ref{lem:lsep} does not hold for strictly convex but not 
smooth convex bodies. Indeed, in $\E^3$, consider the $\oo$-symmetric polytope 
$\PPP:=\conv\{\pm \ev_1,\linebreak[0]\pm \ev_2,\linebreak[0]\pm 
\ev_3,\linebreak[0] \pm 0.9(\ev_1+\ev_2+\ev_3)\}$ where the 
$\ev_i$s are the standard 
basis vectors. The six translation vectors $\pm 
2\ev_1,\pm2\ev_2,\pm2\ev_3$ generate a 
separable Hadwiger configuration of $\PPP$. For the vertex 
$\bb:=0.9(\ev_1+\ev_2+\ev_3)$, we have that each of the 
$3$ lines $\{\bb+ t \ev_i\st 
t\in\Re\}$ intersect $\PPP$ in $\bb$ only. Thus, 
there is a strictly convex 
$\oo$-symmetric body $\K$ with the following properties. 
$\PPP\subset\K$, and $\pm 
\ev_i$ is a boundary point of $\K$ for each $i=1,2,3$, and at $\pm 
\ev_i$, the 
plane orthogonal to $\ev_i$ is a support plane of $\K$, and 
$\bb$ is a boundary 
point of $\K$, and the $3$ lines $\{\bb+ t \ev_i\st 
t\in\Re\}$ intersect $\K$ in $\bb$ only. For this strictly convex 
$\K$, we have 
$\lsep=\infty$.

Thus, it is natural to ask if in Theorem~\ref{thm:contactno} smoothness can be 
replaced by strict convexity. We note that in our proof, Lemma~\ref{lem:lsep} 
is 
the only place which does not carry over to this case.

The same construction of the polytope $\PPP$ shows that $\lsep$ may be 
arbitrarily 
large for a 
three-dimensional smooth convex body $\K$. Indeed, if we take 
$\K:=\PPP+\varepsilon\B^d$ with a small $\varepsilon>0$, we obtain a smooth 
body 
for which, by the previous argument, $\lsep$ is large.

Thus, it would be very interesting to see a lower bound on $f(\K)$ of 
Theorem~\ref{thm:contactno} which depends on $d$ only.


\section*{Acknowledgements}
K{\'a}roly Bezdek was partially supported by a Natural Sciences and 
Engineering Research Council of Canada Discovery Grant.
M{\'a}rton Nasz{\'o}di was partially supported by the National Research, 
Development and Innovation Office (NKFIH) grant NKFI-K119670 and by the J\'anos 
Bolyai Research Scholarship of the Hungarian Academy of 
Sciences, as well as the \'UNKP-17-4 New National Excellence Program of the 
Ministry of Human Capacities.


\end{document}